\newtheorem{theorem}{Theorem}
\newtheorem{lemma}{Lemma}
\newtheorem{corollary}{Corollary}
\theoremstyle{remark}
\newtheorem{remark}{Remark}
\newtheorem{assumption}{Assumption}
\DeclareMathOperator{\dimH}{\mathrm{dim}_\mathrm{H}}
\DeclareMathOperator{\var}{var}
\title{On Shrinking Targets for Piecewise Expanding Interval Maps}
\author{Tomas Persson}
\address{Centre for Mathematical Sciences, Lund University, Box 118,
  22100 Lund, Sweden}
\email{tomasp@maths.lth.se}
\urladdr{http://www.maths.lth.se/~tomasp}
\author{Micha\l{} Rams}
\address{Instytut Matematyczny, Polska Akademia Nauk, ul. Sniadeckich
  8, 00-656 Warszawa, Poland}
\email{rams@impan.pl}
\urladdr{http://www.impan.pl/~rams}
\begin{document}
\subjclass[2010]{37C45, 28A78, 11K55, 11K60}

\date{\today}

\begin{abstract}
  For a map $T \colon [0,1] \to [0,1]$ with an invariant measure
  $\mu$, we study, for a $\mu$-typical $x$, the set of points $y$ such
  that the inequality $|T^n x - y| < r_n$ is satisfied for infinitely
  many $n$. We give a formula for the Hausdorff dimension of this set,
  under the assumption that $T$ is piecewise expanding and $\mu_\phi$
  is a Gibbs measure. In some cases we also show that the set has a
  large intersection property.
\end{abstract}

\maketitle

\section{Introduction}

We consider a map $T \colon [0,1] \to [0,1]$. Let $r =
(r_n)_{n=1}^\infty$ be a sequence of decreasing positive numbers. In
this paper we shall investigate the size of the set
\begin{align*}
  E (x, r) &= \{\, y \in [0,1] : d (T^n x, y) < r_n
  \text{ for infinitely many } n \,\} \\ &= \limsup_{n \to \infty}
  B(T^n x, r_n).
\end{align*}

Sets of this form with $T \colon x \mapsto 2x \mod 1$ were studied by
Fan, Schmeling and Troubetzkoy in \cite{Fanetal}. Li, Wang, Wu and Xu
studied in \cite{Lietal} a related but different set in the case when
$T$ is the Gau\ss{} map.

In the paper \cite{LiaoSeuret}, Liao and Seuret studied the case when
$T$ is an expanding Markov map with a Gibbs measure $\mu$. They proved
that if $r_n = n^{-\alpha}$, then for $\mu$-almost all $x$, the set $E
(x, r)$ has Hausdorff dimension $1/\alpha$ provided that $1/\alpha$
is not larger than the dimension of the measure $\mu$.

In this paper we will consider more general maps than those studied by
Liao and Seuret and prove results similar to those of the three papers
mentioned above. We will use a method of statistical nature very
similar to the one used in \cite{Persson}. The maps we will work with
are mostly piecewise expanding interval maps, but some of our results
are valid for more abstract maps with certain statistical properties.

We will not assume that the maps have a Markov partition. In the case
that $\mu$ is a measure that is absolutely continuous with respect to
Lebesgue measure, we can consider the sets $E(x,r)$ with $r_n =
n^{-\alpha}$ for any $\alpha > 1$. However, for other measures $\mu$
we have to impose extra restrictions on $\alpha$ and our results are
only valid for sufficiently large $\alpha$. This extra restriction is
not present in the works of Fan, Schmeling and Troubetzkoy; Li, Wang,
Wu and Xu; and Liao and Seuret.

The results of this paper are presented in two main theorems, found in
Sections~\ref{sec:acim} and \ref{sec:gibbs}. The first theorem treats
the case when $\mu$ is absolutely continuous with respect to the
Lebesgue measure and no extra restriction is imposed on $\alpha$. In
this case our result is a generalisations of the corresponding result
by Liao and Seuret and we also prove that for almost all $x$ the set
$E(x, r)$ has large intersections. This means that the set $E(x,r)$
belongs for some $0 < s <1$ to the class $\mathscr{G}^s$ of
$G_\delta$-sets, with the property that any countable intersection of
bi-Lipschitz images of sets in $\mathscr{G}^s$ has Hausdorff dimension
at least $s$. See Falconer's paper \cite{Falconer} for more details
about those classes of sets. The large intersection property was not
proved in any of the papers \cite{Fanetal}, \cite{Lietal} and
\cite{LiaoSeuret}.

The second theorem treats more general measures and is only valid for
sufficiently large $\alpha$. Restriction of this type are not present
in the papers \cite{Fanetal}, \cite{Lietal} and \cite{LiaoSeuret}. We
have not been able to prove the large intersection property in this
case in the general setting. However, we prove that if the map is a
Markov map, then the large intersection property holds.

In Section~\ref{sec:examples} we provide explicit examples of maps
that satisfy the assumptions of the two main theorems. Most examples
are for uniformly expanding maps, but we also give some examples with
non-uniformly expanding maps.

One can also study the Hausdorff dimension of the complement of $E
(x,r)$. That was done both in the paper by Liao and Seuret as well as
that by Fan, Schmeling and Troubetzkoy, but we shall not do so in this
paper.

\section{Maps with Absolutely Continuous Invariant Measures} \label{sec:acim}

We will first work with maps $T \colon [0,1] \to [0,1]$ satisfying the
following assumptions.
\begin{assumption} \label{as:acim}
  There exists an invariant measure $\mu$ that is
  absolutely continuous with respect to Lebesgue measure, with density
  $h$ such that $c_h^{-1} < h < c_h$ holds Lebesgue almost everywhere
  for some constant $c_h > 0$.
\end{assumption}
\begin{assumption} \label{as:decay1}
  Correlations decay with summable speed for functions of bounded
  variation: There is a function $p \colon \mathbb{N} \to (0,\infty)$
  such that if $f \in L^1$ and $g$ is of bounded variation, then
  \[
  \biggl| \int f \circ T^n \cdot g \, \mathrm{d} \mu - \int f \,
  \mathrm{d} \mu \int g \, \mathrm{d} \mu \biggr| \leq \lVert f
  \rVert_1 \lVert g \rVert p(n),
  \]
  where $\lVert \psi \rVert = \lVert \psi \rVert_1 + \var \psi$, and
  we assume that the correlations are summable in the sense that
  \[
  C := \sum_{n=0}^\infty p(n) < \infty.
  \]
\end{assumption}

We prove the following theorem. The proof is in
Section~\ref{sec:absctsproof}.

\begin{theorem} \label{the:shrinkingtarget}
  Under the Assumptions~\ref{as:acim} and \ref{as:decay1} above,
  $\dimH E (x, r) \geq s$ for Lebesgue almost all $x \in [0,1]$, where
  \[
  s = \sup \{\, t : \exists c, \forall n : n^{-2} \sum_{j=1}^n
  r_j^{-t} < c \,\}.
  \]
  Moreover, the set $E(x,r)$ belongs to the class $\mathscr{G}^s$ of
  $G_\delta$-sets with large intersections for Lebesgue almost all
  $x$.

  In particular, if $r_n = n^{-\alpha}$ then $\dimH E (x, r) =
  1/\alpha$ for Lebesgue almost all $x \in [0,1]$.
\end{theorem}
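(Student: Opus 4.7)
The plan is to prove $\dimH E(x,r) \geq t$ for Lebesgue almost every $x$ for each fixed $0 < t < s$, and then take a countable supremum $t \uparrow s$ to conclude $\dimH E(x,r) \geq s$. The argument is potential-theoretic: for almost every $x$ we construct a non-zero Borel measure $\nu$ with $\operatorname{supp} \nu \subseteq E(x,r)$ and finite $t$-energy $I_t(\nu) = \int\int |u-v|^{-t}\,d\nu(u)\,d\nu(v)$, so that Frostman's lemma yields $\dimH E(x,r) \geq t$.

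The natural random measures are
\[
\mu_N \;=\; \sum_{n=1}^N \frac{1}{2 r_n}\, \chi_{B(T^n x,\, r_n/2) \cap [0,1]} \cdot \mathcal{L},
\]
where $\mathcal{L}$ is Lebesgue measure; each summand has mass approximately $1/2$, so that $\mu_N([0,1]) \asymp N$ (boundary losses are negligible since $r_n \to 0$). The use of half-radius balls is a cosmetic convenience which ensures that weak-$*$ accumulation points of the tail-truncated measures $\mu_{N_0,N} = \sum_{n=N_0}^{N} (2r_n)^{-1} \chi_{B(T^n x, r_n/2) \cap [0,1]} \cdot \mathcal{L}$ are supported in $\overline{\bigcup_{n \geq N_0} B(T^n x, r_n/2)} \subseteq \bigcup_{n \geq N_0} B(T^n x, r_n)$, and hence, after intersecting over $N_0$, in $E(x,r)$.

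The core of the argument is the expected $t$-energy of $\mu_N$. Splitting $I_t(\mu_N)$ into diagonal ($n = m$) and off-diagonal contributions, the diagonal part equals $\sum_{n} (4r_n^2)^{-1} \int_{B_n \times B_n} |u-v|^{-t}\, du\, dv \asymp \sum_n r_n^{-t}$, where $B_n = B(T^n x, r_n/2)$; this is $O(N^2)$ by the definition of $s$. For the off-diagonal contribution the inner double integral is a function $\Phi_{n,m}(T^n x, T^m x)$; expressing $\Phi_{n,m}$ as an integral of indicators of intervals (whose $\lVert \cdot \rVert$-norms are uniformly $O(1)$) against the kernel $|u-v|^{-t}$, Assumption~\ref{as:decay1} gives $\mathbb{E}_\mu[\Phi_{n,m}(T^n x, T^m x)] \leq C\int_0^1\int_0^1 |u-v|^{-t}\, du\, dv + E_{n,m}$, where the leading term is a finite constant (since $t<1$) and the error $E_{n,m}$, summed over $n \neq m$, contributes $O(N^2)$ thanks to summability of $p(\cdot)$. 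Altogether $\mathbb{E}[I_t(\mu_N)] = O(N^2)$.

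With mass of order $N$ and expected energy of order $N^2$, Chebyshev's inequality yields, for each $N_0$, a set of $x$ of positive Lebesgue measure on which, along a doubling subsequence $N_k$, both $\mu_{N_0,N_k}([0,1]) \geq c N_k$ and $I_t(\mu_{N_0,N_k}) \leq K N_k^2$ hold. A diagonal extraction in $k$ and $N_0$ produces a non-zero weak-$*$ limit $\nu$ supported on $E(x,r)$ with finite $t$-energy, giving $\dimH E(x,r) \geq t$ on a set of positive measure; since summability of correlations implies ergodicity of $(T,\mu)$ and $\{x : \dimH E(x,r) \geq t\}$ is $T$-invariant, this is upgraded to almost every $x$, and letting $t_k \uparrow s$ along a countable sequence completes the dimension lower bound. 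The large intersection property is handled analogously by localizing the construction to an arbitrary open set $V$ from a countable base, restricting the sum to indices $n$ with $T^n x \in V$; the ergodic theorem supplies the necessary mass, and Falconer's criterion then places $E(x,r)$ in $\mathscr{G}^t$ for each $t<s$, hence in $\mathscr{G}^s$. The main obstacle will be the uniform control of $\mathbb{E}_\mu[\Phi_{n,m}(T^n x, T^m x)]$ when $|n-m|$ is small relative to the scales $r_n^{-1}, r_m^{-1}$, where one must carefully balance the $\lVert\cdot\rVert$-norms of the indicator approximants against the near-diagonal singularity of the Riesz kernel $|u-v|^{-t}$.
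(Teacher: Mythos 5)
Your strategy coincides with the paper's: form measures from normalized Lebesgue measure on the balls $B(T^nx,r_n)$, bound the expected $t$-energy via the decay of correlations, and conclude with a Frostman-type lemma (the tail truncation, the Fatou/Chebyshev extraction of a good subsequence, and the covering argument for the upper bound all match). The genuine gap is exactly at the point you flag in your last sentence, and it is not a removable technicality: the off-diagonal estimate, carried out by the mechanism you describe, does not give $O(N^2)$. Writing $\Phi_{n,m}(a,b)=\frac{1}{4r_nr_m}\iint |u-v|^{-t}\chi_{B(u,r_n/2)}(a)\,\chi_{B(v,r_m/2)}(b)\,\mathrm{d}u\,\mathrm{d}v$ and applying Assumption~\ref{as:decay1} to the pair of indicators for each fixed $(u,v)$ gives, for $n>m$, a correlation error of size $\lVert\chi_{B(u,r_n/2)}\rVert_1\,\lVert\chi_{B(v,r_m/2)}\rVert\, p(n-m)\le 3c_h\,r_n\,p(n-m)$; after dividing by $4r_nr_m$ and integrating the kernel over $(u,v)$ (a finite constant for $t<1$), this is of order $p(n-m)/r_m$ per pair. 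Summing over pairs yields $C\sum_{m\le N} r_m^{-1}$, which is $O(N^2)$ only if $t=1$ were admissible in the definition of $s$; for $r_n=n^{-\alpha}$ with $\alpha>1$ --- the main case of the theorem --- it is of order $N^{\alpha+1}\gg N^2$. The loss is structural: the correlation bound is applied uniformly in $(u,v)$ and therefore cannot see the near-diagonal integrability of the Riesz kernel, which is where the needed factor $r^{-s}$ (rather than $r^{-1}$) has to come from.

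The paper's resolution (Lemmata~\ref{lem:energy} and \ref{lem:decay}) is to integrate the kernel over the two balls \emph{before} invoking the decay of correlations: the pair-energy function $I(x_1,x_2)=\frac{1}{r_ir_j}\int_{B_1}\int_{B_2}|u-v|^{-s}\,\mathrm{d}u\,\mathrm{d}v$ is bounded by $c_s\min\{|x_1-x_2|^{-s},r_i^{-s},r_j^{-s}\}$ and, being unimodal in $x_1$, has variation at most $2c_s(r_i^{-s}\wedge r_j^{-s})$; applying Assumption~\ref{as:decay1} to step-function approximations of this two-variable function yields an error $O\bigl((r_i^{-s}\wedge r_j^{-s})\,p(j-i)\bigr)$ per pair, and $\sum_j r_j^{-s}\sum_i p(i)\le C\sum_j r_j^{-s}=O(N^2)$ precisely by the definition of $s$. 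You would need to import this (or an equivalent capped-kernel estimate) to close the argument. A secondary remark: the localization over a countable base that you sketch for the large intersection property is not needed; the paper invokes a Frostman-type lemma of Persson and Reeve (Lemma~\ref{lem:frostman}) which delivers membership in $\mathscr{G}^s$ directly from the uniform energy bounds together with the weak convergence of the measures to an absolutely continuous measure with density bounded and bounded away from zero --- the same data you already produce for the dimension bound.
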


In Section~\ref{sec:examples} we provide some examples of maps
satisfying the assumptions of Theorem~\ref{the:shrinkingtarget}.

\section{Maps with Gibbs Measures} \label{sec:gibbs}

We will now consider a map $T \colon [0,1] \to [0,1]$ with a Gibbs
measure $\mu_\phi$. Our assumptions are as follows.

\begin{assumption} \label{as:piecewise}
  $T$ is piecewise monotone and expanding with respect to a finite
  partition, and there is bounded distortion for the derivative $T'$.
\end{assumption}

\begin{assumption} \label{as:gibbs}
  The potential $\phi \colon [0,1] \to \mathbb{R}$ is of bounded
  distortion, and there is a Gibbs measure $\mu_\phi$ to the potential
  $\phi$, with $\mu_\phi = h_\phi \nu_\phi$ where $h_\phi$ is a
  bounded function that is bounded away from zero, and $\nu_\phi$ is a
  conformal measure, that is, for any subset $A$ of a partition
  element holds
  \[
    \nu_\phi (T(A)) = \int_A e^{P(\phi)-\phi} \, \mathrm{d} \nu_\phi,
  \]
  where $P (\phi)$ denotes the topological pressure of $\phi$.
\end{assumption}

\begin{assumption} \label{as:decay2}
  We have summable decay of correlations for functions of bounded
  variation. That is we assume that there is a function $p \colon
  \mathbb{N} \to (0,\infty)$ such that if $f \in L^1 (\mu_\phi)$ and
  $g$ is of bounded variation, then
  \[
  \biggl| \int f \circ T^n \cdot g \, \mathrm{d} \mu_\phi - \int f
  \mathrm{d} \mu_\phi \int g \, \mathrm{d} \mu_\phi \biggr| \leq
  \lVert f \rVert_1 \lVert g \rVert p(n),
  \]
  holds for all $n$, and we assume that
  \[
  C := \sum_{n=0}^\infty p(n) < \infty.
  \]
\end{assumption}

\begin{assumption} \label{as:measureofball}
  There is a number $s_0 > 0$ such that for any $s < s_0$ there is a
  constant $c_s$ such that $\mu_\phi (I) \leq c_s |I|^s$ holds for any
  interval $I \subset [0,1]$.
\end{assumption}

\begin{remark} \label{rem:dimension}
  We note that Assumption~\ref{as:measureofball} implies that
  \begin{equation} \label{eq:localenergy}
    \iint | x - y |^{-t} \, \mathrm{d} \mu_\phi (x) \mathrm{d}
    \mu_\phi (y) \leq \frac{t c_s}{s-t}.
  \end{equation}
  for any $t < s < s_0$.  This follows since, for any $x$, we have
  \begin{multline} \label{eq:energyatx}
    \int |x - y|^{-t} \, \mathrm{d} \mu_\phi (y) = \int_1^\infty
    \mu_\phi (B (x, u^{-1/t})) \, \mathrm{d}u \\ \leq \int_1^\infty
    c_s u^{-s/t} \, \mathrm{d} u = \frac{t c_s}{s-t},
  \end{multline}
  which implies \eqref{eq:localenergy}.

  Note also that \eqref{eq:localenergy} implies that the lower pointwise
  dimension of $\mu_\phi$ is at least $s_0/2$ at any point in
  $[0,1]$. Indeed, since $|I|^{-s} \leq |x - y|^{-s}$ holds whenever
  $x, y \in I$, we have together with \eqref{eq:localenergy} that
  \begin{align*}
    |I|^{-s} \mu_\phi (I)^2 &\leq \iint_{I \times I} | x - y |^{-s} \,
    \mathrm{d} \mu_\phi (x) \mathrm{d} \mu_\phi (y) \\ & \leq \iint |
    x - y |^{-s} \, \mathrm{d} \mu_\phi (x) \mathrm{d} \mu_\phi (y) =
    c
  \end{align*}
  holds whenever $s < s_0$. Hence $\mu_\phi (I) \leq \sqrt{c}
  |I|^{s/2}$ and the claim follows.
\end{remark}

In this setting we can prove a similar result to
Theorem~\ref{the:shrinkingtarget}. The proof of the following theorem
is in Section~\ref{sec:gibbsproof}.

\begin{theorem} \label{the:gibbs}
  Assume that $T \colon [0,1] \to [0,1]$ satisfies the
  Assumptions~\ref{as:piecewise}, \ref{as:gibbs}, \ref{as:decay2} and
  \ref{as:measureofball}. Then, we have that $\dimH E(x,r) \geq s$ for
  $\mu_\phi$-almost all $x$, where
  \[
  s = \sup \{\, t < s_0 : \exists c, \forall n : n^{-2} \sum_{j=1}^n
  r_j^{-t} < c \,\}.
  \]
  In particular, if $r_n = n^{-\alpha}$ and $\alpha > 1 / s_0$, then
  $\dimH E (x,r) = 1/\alpha$ for $\mu_\phi$-almost every $x$.
\end{theorem}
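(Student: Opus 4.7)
The plan is to use a second-moment (energy) method in the spirit of Persson's statistical approach, adapted from Lebesgue measure to the Gibbs measure $\mu_\phi$. Fix $t < s$; by definition this means $t < s_0$ (so that Assumption~\ref{as:measureofball} and Remark~\ref{rem:dimension} apply) and $M(t) := \sup_n n^{-2} \sum_{j=1}^n r_j^{-t} < \infty$. The goal is to produce, for $\mu_\phi$-almost every $x$, a nontrivial Borel measure $\nu$ supported on $E(x,r)$ with finite $t$-energy $I_t(\nu) = \iint |u-v|^{-t}\, \mathrm{d}\nu(u)\mathrm{d}\nu(v)$; Frostman's lemma then gives $\dimH E(x,r) \geq t$, and letting $t \nearrow s$ finishes the proof.

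Concretely, for each $N$ I would introduce the random measure
\[
  \mu_N^x = \sum_{n=1}^N a_n \, \mathbf{1}_{B(T^n x, r_n)} \, \mathrm{d}\mu_\phi,
\]
with weights $a_n \geq 0$ chosen so that $\mathbb{E}_{\mu_\phi}[\mu_N^x([0,1])]$ grows linearly in $N$. Using the identity $\mathbf{1}_{B(T^n x, r_n)}(u) = \mathbf{1}_{B(u, r_n)}(T^n x)$ and the $T$-invariance of $\mu_\phi$, the expected $t$-energy becomes
\[
  \mathbb{E}_{\mu_\phi}[I_t(\mu_N^x)] = \sum_{n,m=1}^N a_n a_m \iint |u-v|^{-t} \!\int \mathbf{1}_{B(u, r_n)}(y) \mathbf{1}_{B(v, r_m)}(T^{|m-n|} y)\, \mathrm{d}\mu_\phi(y)\mathrm{d}\mu_\phi(u)\mathrm{d}\mu_\phi(v),
\]
and the main task is to bound this by $O(N^2)$.

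The key step, and the main obstacle of the proof, is to balance two regimes in this double sum. For pairs with $|n-m|$ large, Assumption~\ref{as:decay2} decouples the two indicators up to an error of order $p(|m-n|)$, and the resulting product is controlled, using Assumption~\ref{as:measureofball} together with the uniform energy bound~\eqref{eq:localenergy}, by a constant multiple of $(\sum_n a_n r_n^{s})^2$. For pairs with $|n-m|$ small (including the diagonal), the correlation estimate is not useful, so the inner integral is bounded crudely and the integration against $|u-v|^{-t}$ is handled by the pointwise energy estimate~\eqref{eq:energyatx}, producing a factor of $r_{\min(n,m)}^{-t}$. This is precisely where the defining condition of $s$ enters: when the weights are chosen so that the first moment is $\asymp N$, the near-diagonal contribution reduces to a weighted sum essentially of the form $\sum_{j \leq n} r_j^{-t}$, which is $O(N^2)$ exactly by the assumption $n^{-2} \sum_{j=1}^n r_j^{-t} \leq M(t)$.

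Once both estimates $\mathbb{E}[\mu_N^x([0,1])] \asymp N$ and $\mathbb{E}[I_t(\mu_N^x)] \lesssim N^2$ are established, a Chebyshev/variance argument yields, for $\mu_\phi$-a.e.\ $x$, a subsequence $N_k \to \infty$ along which $\mu_{N_k}^x([0,1])$ is at least a positive constant times $N_k$ and $I_t(\mu_{N_k}^x) \leq C N_k^2$. The normalized measures $\tilde\mu_{N_k}^x = \mu_{N_k}^x / \mu_{N_k}^x([0,1])$ then have uniformly bounded $t$-energy, and any weak-$*$ subsequential limit $\nu$ is a probability measure supported on $\limsup_n B(T^n x, r_n) = E(x, r)$, since the contribution of any fixed finite initial segment $\{n \leq M\}$ becomes negligible as $N_k \to \infty$. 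Lower semicontinuity of the energy gives $I_t(\nu) < \infty$, so $\dimH E(x, r) \geq t$, and the theorem follows by letting $t \nearrow s$.
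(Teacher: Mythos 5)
Your first half --- the second-moment computation of the expected $t$-energy of measures supported on $\bigcup_{n\le N} B(T^nx,r_n)$, using the decay of correlations for the off-diagonal pairs and Assumption~\ref{as:measureofball} together with \eqref{eq:energyatx} for the near-diagonal pairs, with the defining condition on $s$ entering exactly where you say it does --- is essentially the computation the paper performs (compare the estimates in the proof of Theorem~\ref{the:shrinkingtarget}, which are reused verbatim for Theorem~\ref{the:gibbs}). The gap is in your last step. A weak-$*$ subsequential limit $\nu$ of the normalized measures $\tilde\mu^x_{N_k}$ is \emph{not} supported on $E(x,r)=\limsup_n B(T^nx,r_n)$. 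The support of $\nu$ is contained in $\bigcap_K\overline{\bigcup_{k\ge K}\operatorname{supp}\mu^x_{N_k}}$, and since each $\mu^x_{N_k}$ is spread over the union of \emph{all} balls $B(T^nx,r_n)$ with $m(N_k)\le n\le N_k$, this intersection of closures is typically all of $[0,1]$ (the orbit of a $\mu_\phi$-typical $x$ is dense). Indeed the paper points out that the limit of these measures is $\mu_\phi$ itself, and $E(x,r)$ is generically a $\mu_\phi$-null set of Hausdorff dimension $1/\alpha<1$, so it cannot carry the limit. Discarding a fixed initial segment $\{n\le M\}$ does not help: weak-$*$ convergence cannot detect whether a given point lies in infinitely many of the balls, only that it is approximated by points in balls of arbitrarily large index. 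So you have no measure on $E(x,r)$ to feed into Frostman's lemma, and the theorem does not follow.

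This passage from ``uniformly bounded energies of measures on the finite unions'' to ``lower bound on $\dimH$ of the limsup set'' is precisely the hard part, and it is where the paper spends most of Section~\ref{sec:gibbsproof}. In the absolutely continuous case the authors can quote the Frostman-type lemma for limsup sets of Persson and Reeve (Lemma~\ref{lem:frostman}), but that lemma requires the weak limit to be comparable to Lebesgue measure, which fails for a general Gibbs measure. They therefore prove a localized substitute (Lemma~\ref{lem:localfrostman}) giving $\sum|U_k|^t\gtrsim|I|^t$ for covers of $E_n\cap I$ over intervals $I$ on which $\mu_\phi$ satisfies a comparability estimate for the truncated energy; they verify this estimate, via bounded distortion, for intervals contained in cylinders with large images (Lemma~\ref{lem:energyanddistorsion}); and they then build a nested Cantor set $N=\bigcap N_n\subset\limsup E_n$ out of such intervals and bound its dimension from below by a covering-sum induction (Lemmata~\ref{lem:net} and~\ref{lem:dimofcantorset}). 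Some argument of this type --- a mass-transference or large-intersection mechanism converting the energy bounds into geometric information about the limsup set --- is indispensable, and your proposal omits it entirely.
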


\begin{remark}
  Note that if $\alpha \leq 1/s_0$ then Theorem~\ref{the:gibbs} gives us
  the result that $\dimH E (x,r) \geq s_0$. However, one would expect
  that $\dimH E(x,r) = 1/\alpha$ as long as $1/\alpha$ is not larger
  than the dimension of $\mu_\phi$, which is the result proved by Liao
  and Seuret in their setting.

  As is clear from Remark~\ref{rem:dimension}, our method cannot work
  for the full range of $\alpha$, since we rely on
  Assumption~\ref{as:measureofball}, so that we cannot consider
  $\alpha$ such that $(2\alpha)^{-1}$ is larger than the lower
  pointwise dimension of $\mu_\phi$ at any point.
\end{remark}

If we also assume that the map is Markov, then we can prove the large
intersection property of the set $E (x,r)$.

\begin{theorem} \label{the:markov}
  Assume that $T \colon [0,1] \to [0,1]$ is a Markov map that
  satisfies the Assumptions~\ref{as:piecewise}, \ref{as:gibbs},
  \ref{as:decay2} and \ref{as:measureofball}. Then, we have that
  $E(x,r) \in \mathscr{G}^s$ for $\mu_\phi$-almost all $x$, where
  \[
  s = \sup \{\, t < s_0 : \exists c, \forall n : n^{-2} \sum_{j=1}^n
  r_j^{-t} < c \,\}.
  \]
\end{theorem}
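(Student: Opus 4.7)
The plan is to invoke Falconer's characterisation of $\mathscr{G}^s$ from \cite{Falconer}: a $G_\delta$-set $F \subseteq [0,1]$ belongs to $\mathscr{G}^s$ if and only if for every $t < s$ there is a constant $c > 0$ such that the Hausdorff content satisfies $\mathcal{H}^t_\infty(F \cap B) \geq c |B|^t$ for all balls $B \subseteq [0,1]$. Since $E(x,r)$ is manifestly a $G_\delta$, the task is to upgrade the global content bound implicit in the proof of Theorem~\ref{the:gibbs} to a uniform local bound, and the Markov assumption is what makes this possible.

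Given a ball $B \subseteq [0,1]$, the combination of piecewise expansion and the Markov property lets me select a cylinder $C$ of depth $k$ with $C \subseteq B$ and $|C|$ of the same order as $|B|$: descend in the Markov partition until the cylinder first fits inside $B$, and by bounded distortion the diameter has only shrunk by a bounded factor in the last step. By bounded distortion, the map $T^k|_C : C \to [0,1]$ is a bi-Lipschitz bijection with Lipschitz constants of order $|C|^{-1}$; let $\psi_C$ denote its inverse. Whenever $T^n x \in C$, the Markov property guarantees that $T^k$ maps $C$ bijectively onto $[0,1]$, and by bounded distortion the ball $B(T^n x, r_n) \cap C$ corresponds under $T^k|_C$ to a ball in $[0,1]$ centred at $T^{n+k} x$ with radius $\tilde r \asymp r_n / |C|$.

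By ergodicity, the subsequence $n_1 < n_2 < \ldots$ of times with $T^{n_j} x \in C$ has positive lower density $\mu_\phi(C)$, so the rescaled sequence $\tilde r_j \asymp r_{n_j}/|C|$ still satisfies the inequality defining $s$, the change of multiplicative constants being absorbed. Running the random-measure / Frostman construction underlying Theorem~\ref{the:gibbs} on the rescaled family $B(T^{n_j+k} x, \tilde r_j)$ in $[0,1]$ yields a measure of finite $t$-energy supported on the corresponding limsup set; transporting this measure back to $C$ via $\psi_C$ produces a measure on $E(x,r) \cap C$ with finite $t$-energy and total mass bounded below, whence $\mathcal{H}^t_\infty(E(x,r) \cap C) \gtrsim |C|^t \asymp |B|^t$ by Frostman's lemma.

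The main obstacle is uniformity of the constants: Theorem~\ref{the:gibbs} provides the construction for a full-measure set of $x$, while here the construction has to succeed simultaneously for every cylinder $C$ of the Markov partition, with constants that do not degrade with the depth of $C$. Since there are only countably many such cylinders, the intersection of the corresponding full-measure sets is still of full measure, and bounded distortion of $T'$ together with the Gibbs property of $\mu_\phi$ ensures that the cylinder-dependent distortion is absorbed into the multiplicative constants.
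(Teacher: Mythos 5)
Your high-level target --- the uniform local content bound $\mathcal{H}^t_\infty\bigl(E(x,r)\cap I\bigr)\geq c\,|I|^t$ for all intervals $I$ with a \emph{single} constant $c$, which by Falconer characterises membership in $\mathscr{G}^t$ --- is exactly the one the paper aims at, but the mechanism you propose does not deliver a uniform constant, and one step is wrong as stated. First, the construction behind Theorem~\ref{the:gibbs} does not ``yield a measure of finite $t$-energy supported on the corresponding limsup set'': the measures $\mu_{n,x}$ have uniformly bounded $t$-energy but converge weakly to $\mu_\phi$, not to anything carried by $E(x,r)$, and passing from their bounded energies to a content bound is precisely the non-trivial content of Lemma~\ref{lem:localfrostman} (and, outside the Markov case, of Lemmata~\ref{lem:net} and \ref{lem:dimofcantorset}); it is not an application of Frostman's lemma to a limit measure. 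Second, and quantitatively fatal for your route: the returns of the orbit to a cylinder $C$ have density $\mu_\phi(C)$, so $n$ returns occupy about $n/\mu_\phi(C)$ iterates, and the rescaled energies pick up a factor $|C|^t/\mu_\phi(C)^2$. Passing to content via ``mass squared over energy'' then gives only $\mathcal{H}^t_\infty\bigl(E(x,r)\cap C\bigr)\geq c\,\mu_\phi(C)^2$, which by Assumption~\ref{as:measureofball} (applied with $t<s<s_0$) is $O(|C|^{2s})=o(|C|^t)$: the constant in Falconer's criterion degenerates as $|C|\to 0$. Bounded distortion cannot absorb this; it is not a distortion effect but the return-time density entering squared. (A smaller issue: for a general Markov map $T^k(C)$ is a union of partition elements, not all of $[0,1]$, so your ``bijection onto $[0,1]$'' tacitly assumes full branches.)

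The paper avoids both problems by not rescaling at all. The Markov property is used only to ensure that the energy-ratio estimate of Lemma~\ref{lem:energyanddistorsion},
\[
\iint_{I\times I}|x-y|^{-t}\,\mathrm{d}\mu_\phi(x)\,\mathrm{d}\mu_\phi(y) < K\,|I|^{-t}\mu_\phi(I)^2,
\]
holds for \emph{every} interval $I$ with one constant $K$, essentially because every interval lies in a cylinder whose image under the corresponding iterate is a union of partition elements and hence of definite size. Lemma~\ref{lem:localfrostman} is engineered so that its content constant is $1/(2K)$: through the kernel-normalised measure $\nu_n$ of \eqref{eq:nudefinition} it depends only on this ratio constant, and neither on the size of the energy bound nor on $\mu_\phi(I)$. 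Applied to the very measures $\mu_{n,x}$ already constructed in the proof of Theorem~\ref{the:gibbs}, it yields $\sum_k|U_k|^t\geq\frac{1}{2K}|I|^t$ for all intervals simultaneously, which is the uniform bound Falconer's criterion requires. If you wish to keep your localisation-by-inverse-branch scheme, you would likewise have to route the rescaled energies through Lemma~\ref{lem:localfrostman} applied to the full-size interval $T^k(C)$ (where the ratio constant is uniform), rather than through a Frostman measure, precisely in order to decouple the final content constant from the factor $\mu_\phi(C)^{-2}$ in the energy.
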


In the next section we give examples of maps satisfying the
assumptions of Theorem~\ref{the:gibbs}.

\section{Examples} \label{sec:examples}

\subsection{Examples for Theorem~\ref{the:shrinkingtarget}} \label{ssec:example1}
There exist some dynamical systems that obviously satisfy the
assumptions of Theorem \ref{the:shrinkingtarget}, for example $n-1$
expanding diffeomorphisms of the circle. We are going to present less
obvious examples of application of our results.

For instance, the maps studied by Liverani in \cite{Liverani} satisfy
the assumptions of Theorem~\ref{the:shrinkingtarget}. These maps are
defined as follows. Assume that there is a finite partition
$\mathscr{P}$ of $[0,1]$ into intervals, such that on every interval
$I \in \mathscr{P}$, the map $T$ can be extended to a $C^2$ map on a
neighbourhood of the closure of $I$, and assume that there is a
$\lambda > 1$ such that $|T'| \geq \lambda$ holds everywhere. To put
it shortly, $T$ is piecewise $C^2$ with respect to a finite partition,
and uniformly expanding. We assume also that $T$ is weakly covering,
as defined by Liverani: The map $T$ is said to be weakly covering if
there exists an $N_0 \in \mathbb{N}$ such that if $I \in \mathscr{P}$,
then
\[
 \bigcup_{k=0}^{N_0} T^k (I) \supset [0,1] \setminus W,
\]
where $W$ is the set of points that never hit the discontinuities of
$T$. Under the assumptions mentioned above, it is shown in
\cite{Liverani} that $T$ has an invariant measure $\mu$ satisfying the
Assumption~\ref{as:acim} above, and the correlations decay
exponentially. Hence they are summable and Assumption~\ref{as:decay1}
holds. We therefore have the following corollary.

\begin{corollary}
  If $T \colon [0,1] \to [0,1]$ is piecewise $C^2$ with respect to a
  finite partition, uniformly expanding, and weakly covering, then
  with $r_n = n^{-\alpha}$, $\alpha \geq 1$ we have
  \[
  \dimH E (x,r) = \frac{1}{\alpha}
  \]
  and $E (x,r) \in \mathscr{G}^{1/\alpha}$ for Lebesgue almost every
  $x \in [0,1]$.
\end{corollary}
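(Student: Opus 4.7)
The plan is essentially to apply Theorem~\ref{the:shrinkingtarget} to Liverani's class of maps, after verifying that both Assumption~\ref{as:acim} and Assumption~\ref{as:decay1} are met. The paragraph preceding the corollary already flags this, so the proof should reduce to a short citation plus a power-sum computation for the specific rate $r_n = n^{-\alpha}$.

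First I would invoke \cite{Liverani}: for a piecewise $C^2$, uniformly expanding, weakly covering interval map, there exists an absolutely continuous invariant probability measure with bounded variation density bounded above and bounded away from zero, which is precisely Assumption~\ref{as:acim}. The weak covering assumption is what rules out the density vanishing on part of $[0,1]$, so it is doing real work here. The same framework gives a spectral gap for the transfer operator on BV, from which one obtains exponential decay of correlations for $L^1$ observables against BV observables, i.e.\ $p(n) = C\rho^n$ with $\rho \in (0,1)$. Such decay is trivially summable, giving Assumption~\ref{as:decay1}.

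Next I would compute $s$ for $r_n = n^{-\alpha}$: since $r_j^{-t} = j^{t\alpha}$, an elementary estimate gives
\[
  n^{-2} \sum_{j=1}^n r_j^{-t} \leq C \, n^{t\alpha - 1},
\]
which is uniformly bounded in $n$ if and only if $t\alpha \leq 1$, so $s = 1/\alpha$. The hypothesis $\alpha \geq 1$ ensures $s \leq 1$, so no degeneracy arises. Theorem~\ref{the:shrinkingtarget} then delivers $\dimH E(x,r) \geq 1/\alpha$ together with $E(x,r) \in \mathscr{G}^{1/\alpha}$ for Lebesgue almost every $x$, and the matching upper bound $\dimH E(x,r) \leq 1/\alpha$ is already part of the theorem; one could alternatively obtain it directly from the cover $E(x,r) \subset \bigcup_{n \geq N} B(T^n x, n^{-\alpha})$, whose $t$-dimensional Hausdorff sum $\sum_n n^{-t\alpha}$ converges for every $t > 1/\alpha$.

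I do not anticipate any hard step: the entire argument is an application plus elementary arithmetic. The only point worth double-checking is that the version of Liverani's result one is citing really gives the uniform density bounds in the precise form required by Assumption~\ref{as:acim}, rather than only strict positivity and $L^\infty$-boundedness in a weaker sense.
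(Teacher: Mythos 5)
Your proposal is correct and follows essentially the same route as the paper: the paper likewise derives the corollary by citing Liverani's results to verify Assumptions~\ref{as:acim} and \ref{as:decay1} (absolutely continuous invariant measure with density bounded and bounded away from zero, plus exponential hence summable decay of correlations for BV against $L^1$) and then applying Theorem~\ref{the:shrinkingtarget}, whose statement already contains the specialisation $s = 1/\alpha$ for $r_n = n^{-\alpha}$ together with the matching covering upper bound. Your explicit power-sum computation and the remark about the role of weak covering are consistent with, and slightly more detailed than, what the paper writes.
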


In fact, it is not necessary to assume that the map is piecewise
$C^2$. It is sufficient that the derivative is of bounded variation,
since then one can combine the estimates by Rychlik \cite{Rychlik}
with the method of Liverani \cite{Liverani} to get the same result.

If the map is piecewise expanding with an indifferent fixed point,
then Assumption~\ref{as:decay1} does not hold. However, as we will see
below, we can still use Theorem~\ref{the:shrinkingtarget} to get the
following result.

\begin{corollary}
  Let $T_\beta:[0,1)\to [0,1)$ with $\beta > 1$ be the
  Manneville--Pomeau map
  \[
  x \mapsto \left\{ \begin{array}{ll} x + 2^{\beta-1} x^{\beta} & x <
    1/2 \\ 2x - 1 & x \geq 1/2 \end{array} \right.
  \]
  and $r_j = j^{-\alpha}, \alpha \geq 1$.  If\/ $1 < \beta < 2$, then
  for Lebesgue almost every $x$ we have that $\dim_H E(x,r) =
  1/\alpha$ and $E (x,r) \in \mathscr{G}^{1/\alpha}$.  If $\beta \geq
  2$, then for Lebesgue almost every $x$ we have that $\dimH E(x,r) =
  \frac{1}{\alpha (\beta - 1)}$ and $E (x,r) \in \mathscr{G}^{1/\alpha
    /(\beta - 1)}$.
\end{corollary}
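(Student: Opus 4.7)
The plan is to apply Theorem~\ref{the:shrinkingtarget} to the first-return map $\hat T := T_\beta^\tau$ on $Y := [1/2, 1)$ and then lift the conclusion back to $T_\beta$. The map $\hat T$ is a piecewise $C^2$ Markov map on $Y$ with countably many full branches (one per value of the return time $\tau$), uniformly expanding on each branch. Standard inducing arguments (e.g.\ via a Young tower) produce an absolutely continuous invariant probability measure $\hat\mu$ on $Y$ with density bounded and bounded away from zero, and give exponential decay of correlations of BV observables against $L^1$ observables. Hence Assumptions~\ref{as:acim} and~\ref{as:decay1} hold for $\hat T$ and Theorem~\ref{the:shrinkingtarget} applies to it.

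Set $\tau_k(x) = \tau(x) + \tau(\hat T x) + \cdots + \tau(\hat T^{k-1}x)$, so that $\hat T^k x = T_\beta^{\tau_k(x)} x$. Then
\[
  E(x, r) \supset \limsup_{k \to \infty} B(\hat T^k x, r_{\tau_k(x)}).
\]
Since Theorem~\ref{the:shrinkingtarget} needs a deterministic radius sequence, I would choose $a_k$ with $\tau_k(x) \leq a_k$ for all sufficiently large $k$ almost surely, and apply the theorem to $\hat T$ with $\hat r_k := r_{a_k}$. The classical tail estimate $\hat\mu(\tau > m) \asymp m^{-1/(\beta-1)}$ dictates the choice of $a_k$: for $\beta < 2$, $\int\tau\,d\hat\mu < \infty$ and Birkhoff gives $\tau_k/k \to \int\tau\,d\hat\mu$ a.s., so $a_k = ck$ works and the sum condition $n^{-2}\sum_{k=1}^n k^{\alpha t}$ is bounded precisely for $t \leq 1/\alpha$; for $\beta \geq 2$, $\int \tau^q\,d\hat\mu < \infty$ only for $q < 1/(\beta-1)$, and a Markov/Borel--Cantelli argument along a geometric subsequence yields $\tau_k = O(k^{\beta - 1 + \epsilon})$ a.s.\ for every $\epsilon > 0$, so taking $a_k = Ck^{\beta-1+\epsilon}$ makes the sum condition hold for $t < 1/(\alpha(\beta-1+\epsilon))$, and letting $\epsilon \to 0$ along a countable sequence gives the dimension bound $s = 1/(\alpha(\beta-1))$. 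The large-intersection property for the induced limsup set lifts to $E(x, r)$, because $E(x, r)$ is $G_\delta$ and the class $\mathscr{G}^s$ is closed under $G_\delta$ supersets.

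For the matching upper bound, the case $\beta < 2$ is immediate: $E(x, r) \subset \bigcup_{n \geq N} B(T_\beta^n x, n^{-\alpha})$ and $\sum n^{-\alpha s} < \infty$ for $s > 1/\alpha$ give $\dimH E(x, r) \leq 1/\alpha$. For $\beta \geq 2$ I would invoke a Thaler/Aaronson-type ergodic estimate: for Lebesgue-almost every $x$ and every $a > 0$, the number of $n \leq N$ with $T_\beta^n x \geq a$ is a.s.\ $O(N^{1/(\beta-1)})$, with a logarithmic correction at $\beta = 2$. Restricting the trivial cover of $E(x, r) \cap [a, 1]$ to those indices and summing $r_n^s$ by Abel summation gives $\sum r_n^s < \infty$ exactly for $\alpha s > 1/(\beta-1)$; letting $a \to 0$ yields $\dimH E(x, r) \leq 1/(\alpha(\beta-1))$. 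The main obstacle I anticipate is controlling the almost-sure constants in the Thaler estimate uniformly enough to make the covering argument rigorous for Lebesgue-typical $x$.
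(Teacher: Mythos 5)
Your proposal follows essentially the same route as the paper: induce on $Y=[1/2,1)$ via the first return map, use the return-time asymptotics ($\tau_k\asymp k$ for $1<\beta<2$ by Birkhoff, $k^{\beta-1-\varepsilon}\leq\tau_k\leq k^{\beta-1+\varepsilon}$ for $\beta\geq 2$ via Aaronson--Denker and Aaronson), replace the random radii $r_{\tau_k(x)}$ by deterministic sandwiching sequences, and apply Theorem~\ref{the:shrinkingtarget} to the induced system. The paper obtains the upper bound for $\beta\geq 2$ from the other half of the same sandwich (the set $E''(x,r'')$ with $r_j''=(c_1j)^{-\alpha(\beta-1-\varepsilon)}$) rather than from a visit-counting/Abel-summation argument, but these are the same estimate in different clothing. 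The dimension statements in your write-up are therefore sound, modulo the usual bookkeeping (the a.s.\ constants depend on $x$, so one should run the argument over a countable family of constants).

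There is, however, one genuine gap: the lifting of the large intersection property. The class $\mathscr{G}^s$ is indeed closed under $G_\delta$ supersets, but this closure respects the ambient open set. Applying Theorem~\ref{the:shrinkingtarget} to $\hat T$ on $Y$ yields that the induced limsup set has the large intersection property \emph{relative to} $(1/2,1)$; by Falconer's net-measure characterisation, a set contained in $[1/2,1)$ cannot belong to $\mathscr{G}^s$ relative to $(0,1)$, since its net content vanishes on every dyadic interval inside $(0,1/2)$. So your superset argument only gives $E(x,r)\in\mathscr{G}^s$ localised to $(1/2,1)$, not the full statement of the corollary. The paper closes exactly this hole by repeating the inducing argument on each interval $I_n=[x_n,1)$, where $x_n$ is the $n$-th preimage of $1/2$ under the left branch; since the $I_n$ exhaust $(0,1)$ and the localised classes glue over open covers, this yields $E(x,r)\in\mathscr{G}^{1/\alpha}$ (respectively $\mathscr{G}^{1/\alpha/(\beta-1)}$) on all of $(0,1)$. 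You would need to add this exhaustion step (or an equivalent one) to complete the proof of the $\mathscr{G}^s$ claim; the dimension claim is unaffected, since $\dimH E(x,r)\geq\dimH\bigl(E(x,r)\cap[1/2,1)\bigr)$ already suffices for the lower bound.
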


\begin{proof}
  Let $S_\beta$ be the first return map on the interval $[1/2, 1)$.
    Then there exists an $S_\beta$-invariant measure $\nu$ that is
    absolutely continuous with respect to Lebesgue measure, and $\nu$
    is ergodic.

  Let $R(x)$ be the return time of $x$ to $[1/2, 1)$, that is, we have
    $T_\beta^{R(x)} = S_\beta (x)$.

  It the case $1 < \beta < 2$ we will do as follows. In this case $R$
  is integrable and so, for almost all $x$ there is a constant $c > 0$
  such that
  \begin{equation} \label{eq:returntimeestimate}
    n \leq \sum_{k=1}^n R_k (x) \leq c n
  \end{equation}
  for all sufficiently large $n$. (The lower bound always holds, since
  $R \geq 1$.) We put
  \[
  r_j' = (cj)^{-\alpha} \qquad \text{and} \qquad r_j'' = j^{-\alpha}.
  \]
  Then for almost all $x$ we will have that
  \[
    B (S_\beta^j (x), r_j') \subset B (T_\beta^{\sum_{k=1}^j R_k (x)}
    (x), r_{\sum_{k=1}^j R_k (x)}) \subset B(S_\beta^j (x), r_j'')
  \]
  for sufficiently large $j$. Hence, with
  \begin{align*}
  E' (x,r') &:= \limsup_{j \to \infty} B (S_\beta^j (x), r_j'), \\
  E'' (x,r'') &:= \limsup_{j \to \infty} B (S_\beta^j (x), r_j''),
  \end{align*}
  we have
  \[
  E' (x,r') \subset E (x,r) \cap [1/2,1] \subset E''(x,r'')
  \]
  for almost all $x$.

  Now, Theorem~\ref{the:shrinkingtarget} implies that $E' (x,r') \cap
  [1/2, 1) \in \mathscr{G}^{1/\alpha}$ for almost all $x$ and $\dimH
    E'' (x,r'') = 1/\alpha$ for almost all $x$. This implies the
    desired result for $E(x,r) \cap [1/2, 1)$.

  In the same way we can get the result for $E (x,r) \cap I_n$ where
  $I_n = [x_n, 1)$, where $x_n$ is the $n$-th pre-image of $1/2$ with
    respect to the left branch of $T_\beta$. This concludes the proof
    for the case $1 \leq \beta < 2$.

  The method above does not quite work when $\beta \geq 2$, since then
  $\int R\, \mathrm{d} \nu = \infty$, and the upper bound of
  \eqref{eq:returntimeestimate} fails. However, whenever $\varepsilon
  > 0$, we have for almost all $x$ that
  \[
  n^{\beta - 1 - \varepsilon} \leq \sum_{k = 1}^n R_k (x) \leq
  n^{\beta - 1 + \varepsilon}.
  \]
  holds for large $n$. The upper bound above follows from
  Theorem~2.3.1 of \cite{Aaronson}. The lower bound follows using
  Theorem~1 in \cite{AaronsonDenker}.

  We now proceed as in the case $1 < \beta < 2$. Put
  \[
  r_j' = (c_2 j)^{-\alpha(\beta - 1 + \varepsilon)} \qquad \text{and}
  \qquad r_j'' = (c_1 j)^{-\alpha (\beta - 1 - \varepsilon)}.
  \]
  With the same notation as previously we then have that
  \[
  E' (x,r') \subset E (x,r) \cap [1/2,1] \subset E''(x,r'')
  \]
  for almost all $x$.

  Theorem~\ref{the:shrinkingtarget} implies that for almost all $x$
  $E'(x, r') \cap [1/2, 1) \in \mathscr{G}^{1/\alpha/(\beta - 1 +
      \varepsilon)}$ and $\dimH E''(x, r'') \cap [1/2,1) = (\alpha
      (\beta - 1 - \varepsilon))^{-1}$. Since $\varepsilon > 0$ can be
      chosen arbitrarily small, this implies the result for $E (x,r)
      \cap [1/2,1)$. As before, we get the result stated in the
        corollary by considering $E (x,r \cap I_n)$ in the same way.
\end{proof}

\subsection{Examples for Theorem~\ref{the:gibbs}}

Here we will show that the Assumptions~\ref{as:decay2} and
\ref{as:measureofball} are satisfied for a natural class of
systems. Consider a map $T$ which is piecewise $C^2$ with respect to a
finite partition, and uniformly expanding, as defined in
Section~\ref{ssec:example1}. Then Assumption~\ref{as:piecewise} is
satisfied.

Suppose that $\phi$ satisfies the assumptions of Liverani, Saussol and
Vaienti in \cite{Liveranietal}, that is, $e^\phi$ is of bounded
variation and that there exists an $n_0$ such that
\begin{equation} \label{eq:contractingpotential}
  \sup e^{S_{n_0} \phi} < \inf L_\phi^{n_0} 1,
\end{equation}
where $S_{n_0} \phi = \phi + \phi \circ T + \cdots + \phi \circ
T^{n_0-1}$ and
\[
L_\phi f (x) = \sum_{T(y) = x} e^{\phi (y)} f(y)
\]
is the transfer operator with respect to the potential $\phi$. We
assume moreover that $\phi$ is piecewise $C^2$ with respect to the
partition of the map, so that the bounded distortion part of
Assumption~\ref{as:gibbs} is satisfied.

Finally, we assume that $T$ is covering, in the sense that for any non
trivial interval $I$ there is an $n$ such that $T^n (I) \supset [0,1]
\setminus W$, where $W$ is the set of points that never hit the
discontinuities of $T$.  Under these assumptions, there exists a
unique Gibbs measure $\mu_\phi$ and the Assumptions~\ref{as:gibbs} and
\ref{as:decay2} hold, see Theorem 3.1 in \cite{Liveranietal}. In this
setting, Assumption~\ref{as:measureofball} will also be satisfied.

\begin{corollary} 
  Assume that $T \colon [0,1] \to [0,1]$ is piecewise $C^2$ with
  respect to a finite partition, uniformly expanding and covering. If
  $\phi$ satisfies the assumptions above, then the
  Assumption~\ref{as:measureofball} is satisfied with
  \[
  s_0 =
  \limsup_{m \to \infty} \inf \frac{S_m \phi - m P(\phi)}{-\log
    |(T^m)'|}.
  \]
  Hence, if $r_n = n^{-\alpha}$, $\alpha > 1/s_0$, then
  \[
  \dimH E(x,r) = \frac{1}{\alpha}
  \]
  for $\mu_\phi$-almost every $x \in [0,1]$.
\end{corollary}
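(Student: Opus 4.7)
The plan is to observe that the paragraph preceding the corollary already arranges Assumptions~\ref{as:piecewise}, \ref{as:gibbs} and \ref{as:decay2} (the first from piecewise $C^2$ uniform expansion, the last two from Theorem~3.1 of \cite{Liveranietal}), so everything reduces to verifying Assumption~\ref{as:measureofball} with the claimed $s_0$. Once that is done, the lower bound $\dimH E(x,r) \geq 1/\alpha$ for $\alpha > 1/s_0$ is immediate from Theorem~\ref{the:gibbs}, while the matching upper bound follows from the routine Borel--Cantelli cover $E(x,r) \subset \bigcup_{n \geq N} B(T^n x, n^{-\alpha})$, whose $1/\alpha$-dimensional Hausdorff content tends to $0$ as $N \to \infty$.

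To verify Assumption~\ref{as:measureofball}, I would first pass to the cylinder level. Writing $I_m(x)$ for the element of the join $\bigvee_{k=0}^{m-1} T^{-k} \mathscr{P}$ containing $x$, bounded distortion of $T'$ gives $|I_m(x)| \asymp |(T^m)'(x)|^{-1}$, and the Gibbs property together with bounded distortion of $\phi$ gives
\[
  \mu_\phi(I_m(x)) \asymp e^{S_m \phi(x) - m P(\phi)},
\]
where the implied constants depend neither on $x$ nor on $m$ (the fact that $T^m I_m(x)$ need not equal $[0,1]$ is absorbed into the constants via the covering condition, which forces $\mu_\phi(T^m I_m(x))$ to be bounded below after finitely many iterates). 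Combining these two estimates, the inequality $\mu_\phi(I_m(x)) \leq C|I_m(x)|^{s}$ is equivalent to
\[
  \frac{S_m \phi(x) - m P(\phi)}{-\log|(T^m)'(x)|} \geq s - o(1).
\]
By the very definition of $s_0$, for every $s < s' < s_0$ there is a sequence $m_k \to \infty$ along which this last inequality holds uniformly in $x$ with $s'$ in place of $s$.

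The remaining step is to extend the cylinder estimate to arbitrary intervals $I \subset [0,1]$. Given $I$, choose $m = m_k$ in the good sequence so that all level-$m$ cylinders have diameter comparable to $|I|$ (uniform expansion makes this possible up to a fixed multiplicative factor). Since the cylinders partition $[0,1]$ and have comparable lengths, $I$ meets only a bounded number of them, and summing the cylinder bound yields $\mu_\phi(I) \leq c_{s'} |I|^{s'}$. The main obstacle is that the good levels from the $\limsup$ may be sparse, so $|I|$ can fall between the scales $\Lambda^{-m_k}$ and $\Lambda^{-m_{k+1}}$ with a large gap. This is handled by partitioning a generic level-$m_k$ cylinder into its level-$m_{k+1}$ sub-cylinders: conformality of $\nu_\phi$ combined with bounded distortion of $\phi$ and $T'$ expresses $\mu_\phi$ of the $m_k$-cylinder as a sum over sub-cylinders of terms each dominated by $|\cdot|^{s'}$, and the loss incurred by summing is compensated by relaxing $s'$ to $s$. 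Consequently $\mu_\phi(I) \leq c_s |I|^s$ for every $s < s_0$, which is exactly Assumption~\ref{as:measureofball}, and Theorem~\ref{the:gibbs} then yields the corollary.
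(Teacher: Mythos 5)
Your reduction of the corollary to verifying Assumption~\ref{as:measureofball} is correct, and the cylinder-level computation ($|I_m(x)| \asymp |(T^m)'(x)|^{-1}$, $\mu_\phi(I_m(x)) \lesssim e^{S_m\phi(x)-mP(\phi)}$, hence the exponent $\inf\frac{S_m\phi-mP(\phi)}{-\log|(T^m)'|}$) is the right heuristic for where $s_0$ comes from. But the step that passes from cylinders to arbitrary intervals has a genuine gap. You claim one can choose $m$ so that \emph{all} level-$m$ cylinders have diameter comparable to $|I|$, so that $I$ meets only boundedly many of them. This is false: even for a full-branched Markov map with branch slopes $2$ and $4$ the level-$m$ cylinder lengths range over $[4^{-m},2^{-m}]$, a spread of $2^m$; and since the maps here are \emph{not} assumed Markov, a level-$m$ cylinder can be cut arbitrarily close to its endpoint by a preimage of a partition point, producing cylinders much shorter than the generic scale. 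Consequently $I$ may meet an unbounded number of level-$m$ cylinders, and because $\sum_j |J_j|^{s} \geq \bigl(\sum_j |J_j|\bigr)^{s}$ for $s<1$, summing the individual bounds $\mu_\phi(J_j)\leq C|J_j|^{s}$ over many small cylinders does \emph{not} yield $\mu_\phi(I)\leq c_s|I|^{s}$ --- the inequality degrades in exactly the wrong direction. (A secondary soft spot: the uniformity of the Gibbs constants via ``the covering condition forces $\mu_\phi(T^mI_m(x))$ to be bounded below after finitely many iterates'' is not justified, since the covering time is not uniform over short intervals; but only the upper Gibbs bound is actually needed, so this is repairable.)

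The paper sidesteps cylinders entirely. It fixes $r_0$ so small that any interval of length $<r_0$ meets at most two partition elements, and then pushes the interval $I$ itself forward: $T(I)$ is a union of at most two intervals of length at most $|I|\sup|T'|$, and the conformality identity $\nu_\phi(T(A))=\int_A e^{P(\phi)-\phi}\,\mathrm{d}\nu_\phi$ gives $\nu_\phi(I)\leq 2\sup e^{\phi-P(\phi)}\cdot\sup_{|I_1|=|I|\sup|T'|}\nu_\phi(I_1)$. Iterating until the image reaches length $r_0$ yields $\nu_\phi(I)\leq C_1|I|^{\theta_1}$ with $\theta_1=\frac{\log 2+\log\sup e^{\phi-P(\phi)}}{-\log\sup|T'|}$, and replacing $T$ by $T^m$ makes the parasitic $\log 2$ (the price of non-Markov branching) negligible as $m\to\infty$, giving the stated $s_0$. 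Note also that this argument gives, for each fixed $m$, a bound valid at \emph{all} scales with a single constant $C_m$, so the issue of the good levels in the $\limsup$ being sparse --- which your sketch addresses only vaguely --- never arises. To repair your approach you would need to replace the cylinder cover by this forward-iteration argument, or restrict to Markov maps with bounded distortion where a bounded-overlap cover at a single stopping scale can be engineered.
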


\begin{proof}
  We will rely on the part of Assumption~\ref{as:gibbs} that says that
  if $A$ is a subset of one of the partition elements, then
  \begin{equation} \label{eq:conformal}
    \nu_\phi (T(A)) = \int_A e^{P(\phi)-\phi} \, \mathrm{d} \nu_\phi,
  \end{equation}
  where $P (\phi) = \lim_{n \to \infty} n^{-1} \log \inf L_\phi^n 1$
  denotes the topological pressure of $\phi$. Since there are
  constants $c_1$ and $c_2$ such that $0 < c_1 < h < c_2$, it suffices
  to prove Assumption~\ref{as:measureofball} for the measure
  $\nu_\phi$.

  Let $r_0 > 0$ be such that any interval of length $r_0$ intersects
  at most two partition elements. If $r < r_0$ and $I$ is an interval
  of length $r$, then $I$ intersects at most two different partition
  elements and therefore $T(I)$ consists of at most two intervals of
  length at most $r \sup |T'|$. By \eqref{eq:conformal}, it follows
  that
  \[
  \nu_\phi (I) \inf_I e^{P(\phi) - \phi} \leq 2 \sup_{|I_1| = r \sup_I
    |T'|} \nu (I_1).
  \]
  Hence
  \[
  \nu_\phi (I) \leq 2 \sup_I e^{\phi - P(\phi)} \sup_{|I_1| = r \sup_I
    |T'|} \nu (I_1).
  \]
  By induction, we conclude that
  \[
  \nu_\phi (I) \leq \bigl(2 \sup_I e^{\phi - P(\phi)} \bigr)^n,
  \]
  where $n$ is the largest integer such that $r (\sup_I |T'|)^n \leq
  r_0$. Hence we have that there is a constant $C_1$, that does not
  depend on $I$, such that
  \[
  \nu_\phi (I) \leq C_1 r^{\theta_1} = C_1 |I|^{\theta_1}, \quad
  \theta_1 = \frac{\log 2 + \log \sup_I e^{\phi - P(\phi)}}{- \log
    \sup_I |T'|}
  \]
  By making the constant $C_1$ sufficiently large, we can ensure that
  the estimate above holds for all intervals $I$, not only those that
  are sufficiently small.

  By considering $T^m$ instead of $T$, where $m$ is a positive
  integer, the same argument gives us the existence of a constant $C_m$
  such that
  \[
  \nu_\phi (I) \leq C_m |I|^{\theta_m}, \quad \theta_m = \frac{\log 2
    + \log \sup_I e^{S_m \phi - m P(\phi)}}{- \log \sup_I |(T^m)'|}
  \]
  holds for any interval $I$.

  This shows that we may take $s_0 = \limsup_{m \to \infty} \inf
  \frac{S_m \phi - m P(\phi)}{-\log |(T^m)'|}$. The assumption
  \eqref{eq:contractingpotential} guaranties that $s_0 > 0$.
\end{proof}

\section{Proof of Theorem~\ref{the:shrinkingtarget}} \label{sec:absctsproof}

The proof of Theorem~\ref{the:shrinkingtarget} will be based on the
following lemma. It is a special case of Theorem~1 in
\cite{PerssonReeve}. We refer to \cite{PerssonReeve} for a proof.

\begin{lemma} \label{lem:frostman}
  Let $E_n$ be open subsets of $[0,1]$, and $\mu_n$ Borel probability
  measures with support in $E_n$, that converge weakly to a measure
  $\mu$ that is absolutely continuous with respect to Lebesgue measure
  and with density that is bounded and bounded away for zero. Suppose
  there exists a constant $C$ such that
  \[
  \iint |x-y|^{-s} \, \mathrm{d} \mu_n (x) \mathrm{d} \mu_n (y) < C
  \]
  holds for all $n$. Then the set $\displaystyle \limsup_{n \to
    \infty} E_n$ belongs to the class $\mathscr{G}^s$ and has
  Hausdorff dimension at least $s$.
\end{lemma}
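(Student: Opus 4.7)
The plan is to reduce the statement to Falconer's characterisation of the large-intersection class $\mathscr{G}^s$ (cf.~\cite{Falconer}), which for a $G_\delta$ set $F\subset[0,1]$ requires that, for each $t<s$, there exist a constant $c_t>0$ with $\mathcal{H}^t_\infty(F\cap B)\ge c_t\,|B|^t$ for every open interval $B\subset[0,1]$. Since $F:=\limsup_n E_n=\bigcap_N\bigcup_{n\ge N}E_n$ is automatically $G_\delta$, and since such a content bound entails the Hausdorff-dimension lower bound, both conclusions will follow from this local estimate.

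I would first fix $t<s$ and an open interval $B\subset[0,1]$. Since $\mu$ has no atoms, weak convergence gives $\mu_n(B)\to\mu(B)\ge c_h^{-1}|B|$, and because $[0,1]$ has diameter one, the hypothesis implies $I_t(\mu_n)\le I_s(\mu_n)\le C$ uniformly in $n$. The normalised restrictions $\tilde\mu_n:=\mu_n|_B/\mu_n(B)$ are then probability measures supported on $E_n\cap\bar B$ whose $t$-energies are uniformly bounded. The classical Cauchy--Schwarz energy/content inequality then yields
\[
\mathcal{H}^t_\infty(E_n\cap B) \;\ge\; \frac{\mu_n(B)^2}{I_t(\mu_n|_B)} \;\ge\; c\,|B|^2
\]
for all sufficiently large $n$. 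This estimate already suffices for the dimension lower bound $\dimH F\ge s$ by intersecting a countable base of intervals and letting $t\uparrow s$.

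The main obstacle is twofold: the inequality above controls each $E_n$ individually rather than $F$ itself, and its $|B|^2$ exponent is strictly weaker than the $|B|^t$ exponent required by Falconer's criterion when $t<1$. I would address the first issue through the countable-intersection stability of $\mathscr{G}^s$ by passing to the open sets $F_N:=\bigcup_{n\ge N}E_n$: once each $F_N\in\mathscr{G}^s$ is established, so is $F=\bigcap_N F_N$ by Falconer's intersection theorem. The second and sharper issue is the heart of the argument: one must combine the energy bound with the fact that $\mu_n\to\mu$ deposits Lebesgue-comparable mass on $F_N$ inside every sub-interval of $B$, and exploit this through a multi-scale (dyadic) decomposition that upgrades the single-scale factor $|B|^2$ to the correct scale-invariant $|B|^t$. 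Carrying out this scale-by-scale argument in full rigour constitutes the substance of Theorem~1 in~\cite{PerssonReeve}, and is where I expect the main technical difficulty of the proof to lie.
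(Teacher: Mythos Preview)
The paper does not prove this lemma at all: it simply records that the statement is a special case of Theorem~1 in \cite{PerssonReeve} and refers there for the proof. Your proposal ultimately lands on the same reference and correctly identifies that the real technical content---upgrading the naive energy/content bound to the scale-invariant estimate $\mathcal{H}^t_\infty(F\cap B)\ge c_t|B|^t$ demanded by Falconer's criterion---is precisely what Theorem~1 of \cite{PerssonReeve} supplies. At the level of overall strategy you therefore agree with the paper.

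One point in your sketch is not justified, however. You assert that the single-scale estimate $\mathcal{H}^t_\infty(E_n\cap B)\ge c\,|B|^2$ ``already suffices for the dimension lower bound $\dimH F\ge s$ by intersecting a countable base of intervals.'' This does not follow: that inequality controls each $E_n$ (equivalently each tail $F_N=\bigcup_{n\ge N}E_n$) individually, but Hausdorff content is not stable under countable intersection, so nothing here transfers to $F=\bigcap_N F_N=\limsup_n E_n$. The step from the $E_n$ to $F$ is exactly where the large-intersection machinery (or an equivalent nested Cantor-type construction, as the paper later carries out for the Gibbs case in Lemmata~\ref{lem:localfrostman}--\ref{lem:dimofcantorset}) is genuinely required; there is no shortcut to the dimension conclusion that bypasses it. So your plan is sound, but the dimension statement should also be deferred to \cite{PerssonReeve} rather than claimed as an easy consequence of the coarse $|B|^2$ estimate.
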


We will also make use of the following two lemmata.

\begin{lemma} \label{lem:energy}
  Let $0 < s < 1$. There is a constant $c_s > 0$ such that if $B_1 =
  B(x_1, r_1)$ and $B_2 = B(x_2,r_2)$ are two balls, then
  \[
  \frac{1}{r_1 r_2} \int_{B_1} \int_{B_2} |x - y|^{-s} \, \mathrm{d}x
  \mathrm{d}y \leq c_s \min \{|x_1 - x_2|^{-s}, r_1^{-s}, r_2^{-s} \},
  \]
  and for any fixed $x_2$, the variation of the function
  \[
  x_1 \mapsto \frac{1}{r_1 r_2} \int_{B_1} \int_{B_2} |x - y|^{-s} \,
  \mathrm{d}x \mathrm{d}y,
  \]
  is less than $2 c_s \min \{r_1^{-s}, r_2^{-s}\}$.
\end{lemma}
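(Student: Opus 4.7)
The plan is to establish the pointwise bound first and then extract the variation estimate from a short unimodality argument.

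For the pointwise inequality, I would split into two regimes. When $|x_1 - x_2| \geq 2(r_1 + r_2)$, for every $x \in B_1$ and $y \in B_2$ we have $|x - y| \geq |x_1 - x_2|/2$, which gives the $|x_1 - x_2|^{-s}$ bound immediately. In the complementary regime, I would apply Fubini together with the one-dimensional rearrangement inequality
\[
  \int_{B_2} |x - y|^{-s} \, \mathrm{d} y \;\leq\; \int_{-r_2}^{r_2} |u|^{-s} \, \mathrm{d} u \;=\; \frac{2 r_2^{1-s}}{1-s},
\]
valid for every $x$ (with equality at $x = x_2$), since the interval $[x_2 - r_2 - x, x_2 + r_2 - x]$ of length $2r_2$ achieves the largest integral of $|u|^{-s}$ when it is centred at $0$. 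Integrating over $B_1$ and dividing by $r_1 r_2$ yields a bound of order $r_2^{-s}$, and the symmetric argument gives order $r_1^{-s}$. When $|x_1 - x_2| < 2(r_1 + r_2)$ one also has $\min\{r_1^{-s}, r_2^{-s}\} \leq 4^s |x_1 - x_2|^{-s}$, so the three estimates combine into $c_s \min\{|x_1 - x_2|^{-s}, r_1^{-s}, r_2^{-s}\}$ with a single constant.

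For the variation bound, fix $x_2$, $r_1$, $r_2$ and denote the function in question by $f(x_1)$. Writing $g(x) = \int_{B_2} |x - y|^{-s} \, \mathrm{d} y$ and taking any antiderivative $G$, one has
\[
  f(x_1) = \frac{G(x_1 + r_1) - G(x_1 - r_1)}{r_1 r_2}, \qquad f'(x_1) = \frac{g(x_1 + r_1) - g(x_1 - r_1)}{r_1 r_2}.
\]
Since $g$ is strictly increasing on $(-\infty, x_2)$, strictly decreasing on $(x_2, \infty)$, and tends to $0$ at $\pm \infty$, the sign of $f'$ agrees with that of $x_2 - x_1$. Thus $f$ is unimodal with its unique maximum at $x_1 = x_2$ and vanishes at infinity, so $\var f = 2 f(x_2)$; applying the pointwise bound at $x_1 = x_2$ gives the required $2 c_s \min\{r_1^{-s}, r_2^{-s}\}$.

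The proof is almost entirely routine and no step presents a genuine obstacle. The only two points one should not gloss over are the gluing of the three regimes of the pointwise bound into a single constant $c_s$ depending only on $s$, and the observation that unimodality together with vanishing tails identifies the total variation with exactly twice the peak value, which is what lets the variation estimate come out of the pointwise estimate for free.
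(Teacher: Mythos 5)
Your proof is correct, and while it follows the same overall template as the paper's --- a two-regime pointwise bound followed by a unimodality argument for the variation --- the execution of both regimes is genuinely different and somewhat more elementary. For the separated case the paper takes the threshold $|x_1-x_2|>r_1$ (with $r_1\ge r_2$, so the balls may still overlap) and controls the integral by the change of variables $x=x_1+r_1u$, $y=x_2+r_2v$, reducing to the finite integral $\int_0^1\int_0^1|1-u-v|^{-s}\,\mathrm{d}u\,\mathrm{d}v$; you instead take the larger threshold $2(r_1+r_2)$, which buys the trivial pointwise bound $|x-y|\ge|x_1-x_2|/2$ at the price of having to check that $\min\{r_1^{-s},r_2^{-s}\}$ and $|x_1-x_2|^{-s}$ are comparable (within a factor $4^s$) in the remaining regime --- which you do. For the non-separated case the paper asserts that the maximum occurs at $x_1=x_2$ and evaluates there, whereas your rearrangement inequality $\int_{B_2}|x-y|^{-s}\,\mathrm{d}y\le 2r_2^{1-s}/(1-s)$ gives the $\min\{r_1^{-s},r_2^{-s}\}$ bound uniformly in $x_1$ with an explicit constant, and as a by-product makes the paper's ``it is clear'' maximality claim rigorous. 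For the variation both arguments rest on unimodality; yours supplies the justification the paper omits, via the sign of $f'$. One microscopic point worth writing out there: to conclude that the sign of $g(x_1+r_1)-g(x_1-r_1)$ equals that of $x_2-x_1$ you need not only the monotonicity of $g$ on each side of $x_2$ but also its symmetry about $x_2$ (so that $g(a)\ge g(b)$ exactly when $|a-x_2|\le|b-x_2|$); this is immediate since $B_2$ is centred at $x_2$, but it is the symmetry rather than the monotonicity alone that does the work.
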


\begin{proof}
  This is intuitively clear, but we provide a proof.

  We suppose that $r_1 \geq r_2$. Let
  \[
  I (x_1,x_2) = \frac{1}{r_1 r_2} \int_{B_1} \int_{B_2} |x - y|^{-s}
  \, \mathrm{d}x \mathrm{d}y.
  \]
  It is clear that $I$ achieves it's maximal value when $x_1 = x_2$,
  for instance when $x_1 = x_2 = 1/2$. Then a direct calculation shows
  that there is a constant $c_1$ such that
  \[
  I (1/2, 1/2) \leq c_1 r_1^{-s}.
  \]
  Hence $I (x_1,x_2) \leq c_1 r_1^{-s} = c_1 \min \{r_1^{-s}, r_2^{-s}
  \}$.

  Suppose that $|x_1 - x_2| > r_1$. It suffices to show that
  $I (x_1,x_2) \leq c_2 |x_1 - x_2|^{-s}$ holds for some constant
  $c_2$. By a change of variables, we have that
  \begin{align*}
    I (x_1, x_2) &= |x_1 - x_2|^{-s} \int_{-1}^1 \int_{-1}^1 \Bigl| 1
    - \frac{r_1}{|x_1-x_2|} u - \frac{r_2}{|x_1 - x_2|} v \Bigr|^{-s}
    \, \mathrm{d}u \mathrm{d}v \\ &\leq 4 |x_1 - x_2|^{-s} \int_0^1
    \int_0^1 \Bigl| 1 - \frac{r_1}{|x_1-x_2|} u - \frac{r_2}{|x_1 -
      x_2|} v \Bigr|^{-s} \, \mathrm{d}u \mathrm{d}v.
  \end{align*}
  Since $r_1 / |x_1 - x_2|$ and $r_2 / |x_1 - x_2|$ are not larger
  than $1$, we have that
  \[
  I (x_1, x_2) \leq 4 |x_1 - x_2|^{-s} \int_0^1 \int_0^1 | 1 - u - v
  |^{-s} \, \mathrm{d}u \mathrm{d}v = c_2 |x_1 - x_2|^{-s}.
  \]
  We can now conclude that $I (x_1, x_2) \leq c_s \min \{|x_1 -
  x_2|^{-s}, r_1^{-s}, r_2^{-s} \}$, with $c_s = \max \{c_1, c_2 \}$.

  The statement about the variation is now a direct consequence since
  the function
  \[
  x_1 \mapsto \frac{1}{r_1 r_2} \int_{B_1} \int_{B_2} |x - y|^{-s} \,
  \mathrm{d}x \mathrm{d}y,
  \]
  is positive, unimodal and with maximal value at most $c_s \min
  \{r_1^{-s}, r_2^{-s}\}$.
\end{proof}

\begin{lemma} \label{lem:decay}
  Suppose that $F \colon [0,1]^2 \to \mathbb{R}$ is a continuous and
  non-negative function, and that $D$ and $E$ are constants such that
  for each fixed $x$ the function $f \colon y \mapsto F(x,y)$
  satisfies $\var f \leq D$ and $\int f \, \mathrm{d} \mu \leq
  E$. Then
  \[
  \int F(T^n x, x) \, \mathrm{d} \mu (x) \leq E + (D + E) p(n).
  \]
\end{lemma}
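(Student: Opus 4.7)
The plan is to decouple the two arguments of $F$ by discretising the first one, after which the decay of correlations applies directly on each piece with the hypothesis playing exactly the role needed to control the BV-norm in the second argument.

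First I would fix an arbitrary $\varepsilon > 0$ and use that $F$ is continuous on the compact square $[0,1]^2$, hence uniformly continuous, to choose $N \in \mathbb{N}$ so large that $|F(a,x) - F(a',x)| < \varepsilon$ whenever $|a - a'| < 1/N$, uniformly in $x$. Partitioning $[0,1]$ into intervals $I_1, \ldots, I_N$ of equal length and picking a representative $a_j \in I_j$ for each $j$, the fact that the $I_j$ partition $[0,1]$ together with uniform continuity yields the pointwise bound
\[
F(T^n x, x) \leq \sum_{j=1}^N \mathbf{1}_{I_j}(T^n x) \, F(a_j, x) + \varepsilon
\]
for $\mu$-almost every $x$.

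Next I would apply Assumption~\ref{as:decay1} to each summand with $f = \mathbf{1}_{I_j} \in L^1(\mu)$, so that $\lVert f \rVert_1 = \mu(I_j)$, and with $g = F(a_j, \cdot)$, whose BV-norm satisfies $\lVert g \rVert = \lVert g \rVert_1 + \var g \leq E + D$ by the hypothesis on $F$. This gives, for every $j$,
\[
\int \mathbf{1}_{I_j}(T^n x) \, F(a_j, x) \, \mathrm{d}\mu(x) \leq \mu(I_j) \int F(a_j, y) \, \mathrm{d}\mu(y) + \mu(I_j)(D+E)\,p(n) \leq \mu(I_j)\bigl(E + (D+E)\,p(n)\bigr).
\]
Summing over $j = 1, \ldots, N$, using $\sum_j \mu(I_j) = 1$, and integrating the $\varepsilon$ correction term against the probability measure $\mu$, I arrive at
\[
\int F(T^n x, x) \, \mathrm{d}\mu(x) \leq E + (D+E)\,p(n) + \varepsilon.
\]
Letting $\varepsilon \downarrow 0$ concludes the argument.

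I do not expect any serious obstacle. The indicator $\mathbf{1}_{I_j}$ is a legitimate $L^1$ function, the BV-norm of $F(a_j, \cdot)$ is controlled uniformly in $j$ by the hypothesis, and the only subtlety is recognising that the uniform continuity of $F$ on $[0,1]^2$ is enough to make the discretisation error uniform in $x$, so that the $\varepsilon$ can be absorbed cleanly at the end.
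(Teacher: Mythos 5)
Your proof is correct and follows essentially the same route as the paper's: discretise the first argument of $F$ by uniform continuity, apply the decay of correlations to each piece with $f = \mathbf{1}_{I_j}$ and $g = F(a_j,\cdot)$ (using non-negativity so that $\lVert g \rVert_1 \leq E$), sum using $\sum_j \mu(I_j) = 1$, and let $\varepsilon \to 0$. No differences worth noting.
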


\begin{proof}
  Let $\varepsilon > 0$. Let $I_k = [k/m, (k+1)/m)$. There is an $m$
  such that if
  \[
  G (x, y) = \sum_{k = 0}^{m-1} F (k/m, y) {1}_{I_k} (x),
  \]
  where ${1}_{I_k}$ denotes the indicator function on $I_k$, then
  \[
  | F (x,y) - G(x,y) | < \varepsilon.
  \]
  Hence we have
  \[
  \biggl| \int F (T^n x, x) \, \mathrm{d} \mu (x) - \int G (T^n x, x)
  \, \mathrm{d} \mu (x) \biggr| < \varepsilon.
  \]

  For each term $F (k/m, y) {1}_{I_k} (x)$ in the sum defining $G$, we
  have
  \begin{multline*}
    \biggl| \int F (k/m, x) {1}_{I_k} (T^n x) \, \mathrm{d} \mu (x) -
    \int F (k/m, x) \, \mathrm{d} \mu (x) \int 1_{I_k} \, \mathrm{d}
    \mu \biggr| \\ \leq \mu (I_k) (D + E) p(n).
  \end{multline*}
  by the decay of correlations. As a consequence, we have
  \[
  \int F (k/m, x) {1}_{I_k} (T^n x) \, \mathrm{d} \mu (x) \leq E \mu
  (I_k) + \mu (I_k) (D + E) p(n).
  \]
  and so
  \[
  \int F (T^n x, x) \, \mathrm{d} \mu (x) \leq \varepsilon + \int G
  (T^n x, x) \, \mathrm{d} \mu (x) \leq \varepsilon + E + (D + E)
  p(n).
  \]
  Let $\varepsilon \to 0$.
\end{proof}

\begin{proof}[Proof of Theorem~\ref{the:shrinkingtarget}]
  Let $B_n (x) = B(T^n x, r_n)$. We consider the sets
  \[
  V_n (x) = \bigcup_{k = m(n)}^n B_k (x)
  \]
  where $m(n)$ is a slowly increasing sequence such that $m(n) < n$
  and $m(n) \to \infty$ as $n \to \infty$. It then holds that $\limsup
  V_n (x) = \limsup B_n (x)$.

  We define probability measures $\mu_{n, x}$ with support in $V_n
  (x)$ by
  \[
  \mu_{n,x} = \frac{1}{n - m(n) + 1} \sum_{k = m(n)}^n \lambda_{B_k
    (x)},
  \]
  where $\lambda_A$ denotes the Lebesgue measure restricted to the set
  $A$ and normalised so that $\lambda_A (A) = 1$. It is clear that
  $\mu_{n,x}$ converges weakly to $\mu$ as $n \to \infty$ for almost
  every $x$.

  We shall consider the quantities
  \[
  I_s (\mu_{n,x}) = \iint |y - z|^{-s} \, \mathrm{d} \mu_{n,x} (y)
  \mathrm{d} \mu_{n,x} (z).
  \]
  From the definition of the measure $\mu_{n,x}$ it follows that
  \[
  I_s (\mu_{n,x}) = \frac{1}{(n - m(n) + 1)^2} \sum_{i = m(n)}^n
  \sum_{j = m(n)}^n \frac{1}{4 r_i r_j} \int_{B_i} \int_{B_j} |y -
  z|^{-s} \, \mathrm{d}y \mathrm{d}z,
  \]
  We now assume that $m(n) < n / 2$. Together with
  Lemma~\ref{lem:energy} we then get that
  \[
  I_s (\mu_{n,x}) \leq \frac{4 c_s}{n^2} \sum_{m(n) \leq i \leq j \leq
    n} \min \{|T^i x - T^j x|^{-s}, r_i^{-s} \}.
  \]

  Using that $\mu$ is $T$-invariant, we can write
  \[
  \int I_s (\mu_{n,x}) \, \mathrm{d}\mu (x) \leq \frac{4 c_s}{n^2}
  \sum_{m(n) \leq i \leq j \leq n} \int \min \{|T^{j -i} x - x|^{-s},
  r_i^{-s} \wedge r_j^{-s} \} \, \mathrm{d} \mu (x),
  \]
  where $a \wedge b$ denotes the minimum of $a$ and $b$.

  An application of Lemma~\ref{lem:decay} gives that
  \begin{align*}
    \int I_s (\mu_{n,x}) \, \mathrm{d}\mu (x) & \leq \frac{1}{n^2}
    \sum_{m(n) \leq i \leq j \leq n} \bigl( C_1 + (C_1 + 2 (r_i^{-s}
    \wedge r_j^{-s}) ) p(j-i) \bigr) \\ & \leq \frac{1}{n^2}
    \sum_{m(n) \leq i \leq j \leq n} C_2 (1 + (r_i^{-s} \wedge
    r_j^{-s}) p(j-i)) \\ & \leq C_2 + \frac{C_2}{n^2} \sum_{j = 1}^n
    \sum_{i=1}^j (r_i^{-s} \wedge r_j^{-s}) p (j-i).
  \end{align*}
  Since $p$ is summable, we can estimate that
  \[
  \sum_{j = 1}^n \sum_{i=1}^j (r_i^{-s} \wedge r_j^{-s}) p (j-i) \leq
  \sum_{j = 1}^n \sum_{i=1}^j r_j^{-s} p (j-i) = \sum_{j = 1}^n
  \sum_{i=0}^{j-1} r_j^{-s} p (i) \leq C \sum_{j=1}^n r_j^{-s}.
  \]
  (This estimate is actually not too rough, since
  \[
  \sum_{j = 1}^n \sum_{i=1}^j (r_i^{-s} \wedge r_j^{-s}) p (j-i) =
  \sum_{j = 1}^n \sum_{i=0}^{j-1} (r_{j-i}^{-s} \wedge r_j^{-s}) p (i)
  \geq \sum_{j = 1}^n r_j^{-s} p(0),
  \]
  which is of the same order of magnitude if $r_i \to 0$ as $i \to \infty$.)

  We conclude that
  \[
  \int I_s (\mu_{n,x}) \, \mathrm{d} \mu (x) \leq C_2 + \frac{C
    C_2}{n^2} \sum_{j=1}^n r_j^{-s},
  \]
  and this is uniformly bounded for all $n$ if
  \[
  s < \sup \{\, t : \exists c, \forall n : n^{-2} \sum_{j=1}^n
  r_j^{-t} < c \,\}.
  \]

  Suppose $s$ satisfies the inequality above. Then, by Birkhoff's
  ergodic theorem, for $\mu$-almost all $x$ the measures $\mu_{n,x}$
  converges weakly to the measure $\mu$, and, as follows from the
  considerations above, for $\mu$-almost all $x$, there is a sequence
  $n_k$, with $n_k \to \infty$, such that the sequence $(I_s
  (\mu_{n_k,x}))_{k=1}^\infty$ is bounded.  We can now apply
  Lemma~\ref{lem:frostman} and conclude that for $\mu$-almost all $x$
  the set $E(x, r)$ belongs to the class $\mathscr{G}^s$. This proves
  the first part of Theorem~\ref{the:shrinkingtarget}.

  If $r_n = n^{-\alpha}$, then it is easy to check that the result
  above gives us that the set $E (x,r)$ belongs to
  $\mathscr{G}^{1/\alpha}$ for almost all $x$. A simple covering
  argument shows that in fact the dimension is not larger than
  $1/\alpha$.
\end{proof}

\section{Proof of Theorems~\ref{the:gibbs} and \ref{the:markov}} \label{sec:gibbsproof}

Assume that we have a sequence of open sets $E_n$, such that each
$E_n$ is a finite union of disjoint intervals, and that the diameters
of these intervals go to zero as $n$ grows. We are first going to
study the Hausdorff dimension of the set $\limsup E_n$ in the
following lemmata. The proof of Theorem~\ref{the:gibbs} will then be
similar to that of Theorem~\ref{the:shrinkingtarget}, but will instead
be based on the lemmata below.

\begin{lemma} \label{lem:localfrostman}
  Let $E_n$ be open subsets of $[0,1]$. Suppose there are Borel
  probability measures $\mu_n$ with support in $E_n$, that converge
  weakly to a measure $\mu$ that satisfies \eqref{eq:localenergy}. If
  for some $t < s < s_0$ there is a constant $C$ such that
  \[
  \iint |x - y|^{-s} \, \mathrm{d} \mu_n (x) \mathrm{d} \mu_n (y) < C
  \]
  for all $n$, then, whenever $I$ is an interval with
  \[
  \iint_{I \times I} |x - y|^{-t} \, \mathrm{d} \mu (x) \mathrm{d}
  \mu (y) < c |I|^{-t} \mu (I)^2,
  \]
  there is an $n_I$ such that
  \[
  \sum |U_k|^t \geq \frac{1}{2c} |I|^t
  \]
  holds for any cover $\{U_k\}$ of $E_n \cap I$, $n > n_I$.
\end{lemma}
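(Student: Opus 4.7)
The plan is to combine a Cauchy--Schwarz energy estimate with a weak-convergence transfer of the local $t$-energy bound from $\mu$ to the measures $\mu_n$. Writing $\nu_n = \mu_n|_I$ and
\[
I_t(\nu_n) := \iint_{I \times I}|x-y|^{-t}\,\mathrm{d}\mu_n(x)\,\mathrm{d}\mu_n(y),
\]
I would carry out the argument in two main steps.

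First I would show $I_t(\nu_n) < c|I|^{-t}\mu_n(I)^2$ for all sufficiently large $n$. Splitting the integral at $|x-y|=\varepsilon$, the near-diagonal contribution is uniformly controlled in $n$ by the $s$-energy bound,
\[
\iint_{|x-y|<\varepsilon}|x-y|^{-t}\,\mathrm{d}\mu_n\,\mathrm{d}\mu_n \leq \varepsilon^{s-t}\iint|x-y|^{-s}\,\mathrm{d}\mu_n\,\mathrm{d}\mu_n \leq \varepsilon^{s-t}C,
\]
and similarly for $\mu$ using \eqref{eq:localenergy}. On $\{|x-y|\geq \varepsilon\}\cap (I\times I)$ the integrand is bounded and continuous, so after perturbing $I$ slightly to ensure $\mu(\partial I) = 0$, the weak convergence $\mu_n\to\mu$ gives convergence of that portion. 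Combined with $\mu_n(I)\to\mu(I)$, the strict inequality in the hypothesis is preserved for $n$ large.

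Second, given any cover $\{U_k\}$ of $E_n\cap I$, I would replace each $U_k$ by its convex hull (an interval of the same diameter) and extract a subcover of multiplicity at most two, using the standard fact that any interval cover of a subset of $\mathbb{R}$ admits such a refinement. Passing to a subcover only decreases $\sum_k|U_k|^t$, so it suffices to prove the bound for this refined cover. By Cauchy--Schwarz,
\[
\mu_n(I) = \nu_n\Bigl(\bigcup_k U_k\Bigr) \leq \sum_k \nu_n(U_k) \leq \Bigl(\sum_k |U_k|^t\Bigr)^{1/2}\Bigl(\sum_k \frac{\nu_n(U_k)^2}{|U_k|^t}\Bigr)^{1/2}.
\]
For each $k$, since $|x-y|^{-t} \geq |U_k|^{-t}$ whenever $x,y\in U_k$,
\[
\frac{\nu_n(U_k)^2}{|U_k|^t} \leq \iint_{U_k\times U_k}|x-y|^{-t}\,\mathrm{d}\nu_n(x)\,\mathrm{d}\nu_n(y),
\]
and summing with multiplicity at most two yields $\sum_k \nu_n(U_k)^2/|U_k|^t \leq 2I_t(\nu_n)$. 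Combining with the first step,
\[
\mu_n(I)^2 \leq 2 I_t(\nu_n)\sum_k|U_k|^t < 2c|I|^{-t}\mu_n(I)^2\sum_k|U_k|^t,
\]
whence $\sum_k |U_k|^t > \tfrac{1}{2c}|I|^t$.

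The main obstacle is the first step: transferring a $t$-energy bound through weak convergence in the presence of the singularity $|x-y|^{-t}$ on the diagonal. This is precisely where the hypothesis $I_s(\mu_n) \leq C$ is essential, as it controls the near-diagonal contribution uniformly in $n$, while the off-diagonal part is handled by standard weak-convergence arguments.
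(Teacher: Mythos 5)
Your proof is correct, but the second half takes a genuinely different route from the paper's. The first step --- transferring the strict local $t$-energy bound from $\mu$ to $\mu_n|_I$ for large $n$ by splitting at $|x-y|=\varepsilon$, controlling the near-diagonal part uniformly via the $s$-energy hypothesis and the rest via weak convergence --- is exactly the content of Corollary 2.3 of \cite{PerssonReeve}, which the paper simply cites; note that no perturbation of $I$ is needed, since \eqref{eq:localenergy} forces $\mu$ to be non-atomic and hence $\mu(\partial I)=0$, and you should record that the hypothesis on $I$ forces $\mu(I)>0$, so that $\mu_n(I)$ stays bounded away from zero and your final division is legitimate. For the covering bound the paper proceeds differently: it introduces the reweighted measure $\nu_n$ of \eqref{eq:nudefinition}, with density proportional to $(R_t\mu_n|_I)^{-1}$, and proves the Frostman estimate $\nu_n(U)\leq 2c\,|U|^t/|I|^t$ for \emph{every} subinterval $U\subset I$ (the pointwise bound $\frac{1}{|U|^t}\int_U(R_t\mu_n|_I)^{-1}\,\mathrm{d}\mu_n\leq 1$ is taken from \cite{PerssonReeve}, and the lower bound on the normalising integral comes from Jensen's inequality); the covering estimate is then the mass-distribution principle. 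You instead run the classical energy-to-Hausdorff-content argument: reduce to an interval cover of multiplicity at most two and apply Cauchy--Schwarz. Your route is more elementary and self-contained, and the multiplicity-two reduction is the standard covering fact for intervals in $\mathbb{R}$, so the argument closes. What the paper's route buys is the measure $\nu_n$ itself: it is not merely a proof device but is reused in the proof of Lemma~\ref{lem:net}, where one selects finite subfamilies $\mathscr{J}_n'$ with $\nu_n(\cup\mathscr{J}_n')>\frac12$ and reruns the mass-distribution step; with your approach that later step would have to be replaced by a second Cauchy--Schwarz argument, which works but costs an extra constant.
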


\begin{proof}
  The assumptions implies that for any $t < s$
  \[
  \iint_{I \times I} |x - y|^{-t} \, \mathrm{d} \mu_n (x) \mathrm{d}
  \mu_n (y) \to \iint_{I \times I} |x - y|^{-t} \, \mathrm{d} \mu (x)
  \mathrm{d} \mu (y),
  \]
  as $n \to \infty$. (See Corollary 2.3 of \cite{PerssonReeve}.)

  For a measure $\nu$ on $I$ we write $R_t \nu (x) = \int |x-y|^{-t}
  \, \mathrm{d} \nu (y)$.

  Take an interval $I \subset [0,1]$ satisfying the assumption of the
  lemma, and define the measure $\nu_n$ on $I$ by
  \begin{equation} \label{eq:nudefinition}
  \nu_n (A) = \frac{\int_A (R_t \mu_n |_I)^{-1} \, \mathrm{d}
    \mu_n}{\int_I (R_t \mu_n |_I)^{-1} \, \mathrm{d} \mu_n},
  \end{equation}
  where $\mu_n |_I$ denotes the restriction of $\mu_n$ to $I$.

  There is an $n_I$ such that if $n > n_I$ then
  \begin{equation} \label{eq:nuestimate}
  \nu_n (U) \leq 2 c \frac{|U|^t}{|I|^t}
  \end{equation}
  holds for all intervals $U \subset I$. This is proved as follows. By
  the definition of $\nu_n$ the estimate \eqref{eq:nuestimate} is
  equivalent to
  \[
  \frac{1}{|U|^t} \int_U (R_t \mu_n |_I)^{-1} \, \mathrm{d} \mu_n \leq
  \frac{2 c}{|I|^t} \int_I (R_t \mu_n |_I)^{-1} \, \mathrm{d} \mu_n.
  \]
  We prove the stronger statement that
  \begin{equation} \label{eq:Rtintegrals}
  \frac{1}{|U|^t} \int_U (R_t \mu_n |_I)^{-1} \, \mathrm{d} \mu_n \leq
  1 \leq \frac{2 c}{|I|^t} \int_I (R_t \mu_n |_I)^{-1} \, \mathrm{d}
  \mu_n.
  \end{equation}

  The first inequality in \eqref{eq:Rtintegrals} is proved in
  \cite{PerssonReeve}. (Use Lemma~2.4 of \cite{PerssonReeve} and
  approximate with measures that are absolutely continuous with
  respect to Lebesgue.) To prove the second inequality we use Jensen's
  inequality and Assumption~\ref{as:measureofball} (in particular
  \eqref{eq:localenergy}) to conclude that
  \begin{align*}
    \int_I (R_t \mu_n |_I)^{-1} \, \frac{\mathrm{d} \mu_n}{\mu_n (I)}
    & \geq \biggl(\int_I (R_t \mu_n |_I) \, \frac{\mathrm{d}
      \mu_n}{\mu_n (I)} \biggr)^{-1} \\ &= \biggl( \frac{1}{\mu_n (I)}
    \iint_{I \times I} |x-y|^{-t} \mathrm{d} \mu_n (x) \mathrm{d}
    \mu_n (y) \biggr)^{-1} \\ &\geq \frac{1}{\sqrt 2} \biggl(
    \frac{1}{\mu (I)} \iint_{I \times I} |x-y|^{-t} \mathrm{d} \mu (x)
    \mathrm{d} \mu (y) \biggr)^{-1} \\ &\geq \frac{1}{2c}
    |I|^t \mu_n (I)^{-1},
  \end{align*}
  provided $n > n_I$ for some $n_I$. Hence
  \[
  \frac{1}{|I|^t} \int_I (R_t \mu_n |_I)^{-1} \, \mathrm{d} \mu_n\geq
  \frac{1}{2c}
  \]
  and \eqref{eq:Rtintegrals} follows.

  We have now proved \eqref{eq:nuestimate}, and will use it as
  follows. Suppose that $\{U_k\}$ is a cover of $E_n \cap I$, and $n >
  n_I$. Then
  \[
  1 = \nu_n (\bigcup_k U_k) \leq \sum_k \nu_n (E_k) \leq
  \frac{2 c}{|I|^t} \sum_k |U_k|^t.
  \]
  This shows that $\sum_k |U_k|^t \geq \frac{1}{2c} |I|^t$ for
  any cover $\{U_k\}$ of $E_n \cap I$.
\end{proof}

If we would have known that for some constant $c$, the estimate
\[
\iint_{I \times I} |x - y|^{-t} \, \mathrm{d} \mu_\phi (x) \mathrm{d}
\mu_\phi (y) < c |I|^{-t} \mu_\phi (I)^2,
\]
holds for any $I$, then we could have used this to prove that the set
$E (x, r)$ has a large intersection property, see the proof of
Theorem~\ref{the:gibbs}. However, we are unable to prove that such a
constant exists, and our strategy is instead to prove that we have
such an estimate for sufficiently many intervals to get the dimension
result. The lemma below is what we need.

If $\mathscr{Z}$ is the partition with respect to which $T$ is
piecewise expanding, then the elements of the partition $\mathscr{Z}
\vee T^{-1} \mathscr{Z} \vee \cdots \vee T^{-n+1} \mathscr{Z}$
are called cylinders of generation $n$.

\begin{lemma} \label{lem:energyanddistorsion}
  Let $d_0 > 0$ be given and suppose that \eqref{eq:localenergy} holds
  and that $s < s_0$.  Then there is a constant $K = K (d_0)$ such
  that if $I$ is an interval that is a subset of a cylinder of
  generation $n$ and $|T^n (I)| > d_0$, then
  \begin{equation} \label{eq:goodinterval}
  \iint_{I \times I} |x - y|^{-s} \, \mathrm{d} \mu_\phi (x)
  \mathrm{d} \mu_\phi (y) < K |I|^{-s} \mu_\phi (I)^2.
  \end{equation}
\end{lemma}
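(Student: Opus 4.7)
The plan is to exploit that $T^n$ restricted to $I$ is a monotone homeomorphism onto $J := T^n(I)$ (since $I$ lies inside a single cylinder $Z_n$ of generation $n$), and to use it as a change of variables that transfers the local energy computation on $I$ to one on the comparatively large set $J$, where \eqref{eq:localenergy} furnishes a uniform bound.

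The first step is to record what bounded distortion provides. Fix any reference point $x_0 \in I$ and set $\lambda := |(T^n)'(x_0)|$ and $\Lambda := e^{S_n\phi(x_0) - nP(\phi)}$. The bounded distortion of $T'$ in Assumption~\ref{as:piecewise} and of $\phi$ in Assumption~\ref{as:gibbs} provide universal constants so that the ratios $|(T^n)'(z)|/\lambda$ and $e^{S_n\phi(z)-nP(\phi)}/\Lambda$ both lie in a bounded interval $[K^{-1},K]$ for every $z\in Z_n$. Combining this with the mean value theorem and the iterated conformality identity
\[
\nu_\phi(T^n(A)) = \int_A e^{nP(\phi) - S_n\phi} \, \mathrm{d}\nu_\phi, \qquad A\subset Z_n,
\]
yields, for every measurable $A\subset I$, two comparison estimates in which $|A|$ is comparable to $\lambda^{-1}|T^n(A)|$ and $\mu_\phi(A)$ is comparable to $\Lambda\,\mu_\phi(T^n(A))$, with multiplicative constants depending only on the distortion bounds and on the bounds on $h_\phi$.

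Next I would push the double integral forward under $T^n|_I$. For $x,y\in I$ the comparability of distances gives $|x-y|^{-s}\leq C\lambda^s|T^n x-T^n y|^{-s}$, while the comparability of measures shows that the push-forward $(T^n|_I)_*(\mu_\phi|_I)$ is dominated by $C\Lambda\,\mu_\phi|_J$. Consequently,
\[
\iint_{I\times I}|x-y|^{-s}\,\mathrm{d}\mu_\phi(x)\,\mathrm{d}\mu_\phi(y) \leq C'\lambda^s\Lambda^2 \iint_{J\times J}|u-v|^{-s}\,\mathrm{d}\mu_\phi(u)\,\mathrm{d}\mu_\phi(v),
\]
and the right-hand integral is finite by \eqref{eq:localenergy} since $s<s_0$. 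The same comparabilities used in the opposite direction give $|I|^{-s}\mu_\phi(I)^2 \geq c\lambda^s\Lambda^2 |J|^{-s}\mu_\phi(J)^2$. The factors $\lambda^s\Lambda^2$ cancel in the quotient, leaving a bound of the form $K_0\,|J|^s/\mu_\phi(J)^2 \leq K_0/\mu_\phi(J)^2$ after using $|J|\leq 1$.

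The main obstacle, and the only place where the hypothesis $|J|>d_0$ enters, is a uniform lower bound $\mu_\phi(J)\geq c(d_0)>0$ valid for every interval $J\subset[0,1]$ with $|J|\geq d_0$. Since $h_\phi$ is bounded away from zero, this reduces to the analogous statement for $\nu_\phi$. I would establish it by iterating $T$ forward: uniform expansion forces the lengths $|T^k(J)|$ to grow at a definite geometric rate on every sub-piece that has not yet crossed a partition boundary, so within $m=m(d_0)$ iterates the image of $J$ contains a full element $Z$ of the partition of generation $m$. Running the conformality identity backwards over these $m$ steps, together with the $\phi$-distortion bound, compares $\nu_\phi(J)$ to $\nu_\phi(Z)$; since there are only finitely many partition elements of generation $m$ and each has positive $\nu_\phi$-measure, this produces the required uniform constant $c(d_0)$ and completes the proof.
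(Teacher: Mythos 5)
Your proposal follows essentially the same route as the paper: bounded distortion of $T'$ and of $\phi$, together with the conformality of $\nu_\phi$, is used to compare the normalised energy of $I$ with that of $J = T^n(I)$, and the latter is bounded uniformly over intervals of length greater than $d_0$ via \eqref{eq:localenergy}. The only difference is that you spell out why the supremum of $|J|^s \mu_\phi(J)^{-2}\iint_{J\times J}|u-v|^{-s}\,\mathrm{d}\mu_\phi(u)\,\mathrm{d}\mu_\phi(v)$ over intervals with $|J|>d_0$ is finite, namely by proving a uniform lower bound $\mu_\phi(J)\geq c(d_0)>0$, a point the paper simply asserts in the definition of $K_0$.
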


\begin{proof}
  Let
  \[
  K_0 = \sup_{|I| > d_0} \frac{|I|^s}{\mu_\phi (I)^2} \iint_{I \times
    I} |x - y|^{-s} \, \mathrm{d} \mu_\phi (x) \mathrm{d} \mu_\phi (y)
  < \infty.
  \]
  By the bounded distortion, there exists a constant $K_1$ such that
  \begin{multline*}
    \frac{|I|^s}{\mu_\phi (I)^2} \iint_{I \times I} |x - y|^{-s} \,
    \mathrm{d} \nu_\phi (x) \mathrm{d} \nu_\phi (y)\\ < K_1
    \frac{|T^n(I)|^s}{\nu_\phi (T^n (I))^2} \iint_{T^n (I) \times
      T^n (I)} |x - y|^{-s} \, \mathrm{d} \nu_\phi (x) \mathrm{d}
    \nu_\phi (y),
  \end{multline*}
  whenever $I$ is an interval contained in a cylinder of generation
  $n$. Since $\mu_\phi = h_\phi \nu_\phi$, where $h_\phi$ is bounded
  and bounded away from zero, the combination of these two estimates
  gives us the desired result.
\end{proof}

By Lemma~\ref{lem:energyanddistorsion} we know that some particular
intervals are good, in the sense that we have the estimate
\eqref{eq:goodinterval}. We will now use these intervals to construct
a Cantor set $N = \cap N_n \subset \limsup E_n$ with large
dimension. The following lemma describes the important properties of
this construction.

\begin{lemma} \label{lem:net}
  Suppose that the assumptions of Lemma~\ref{lem:localfrostman} hold
  with $\mu = \mu_\phi$, and that \eqref{eq:localenergy} is satisfied.
  Then, for any $\varepsilon > 0$, there is a sequence of sets $N_n$
  with the following properties.
  \begin{enumerate}
    \addtocounter{enumi}{1}
  \item[(\roman{enumi})] All $N_n$ are compact, each $N_n = \cup
    N_{n,i}$ is a finite and disjoint union of intervals $N_{n,i}$,
    and $N_{n+1} \subset N_n$.  \addtocounter{enumi}{1}
  \item[(\roman{enumi})] There is an increasing sequence $m_n$ such
    that $N_n \subset E_{m_n}$.  \addtocounter{enumi}{1}
  \item[(\roman{enumi})] For any $N_{n,i}$ we have
    \[
    \sum |U_k|^t \geq \frac{1}{4K} |N_{n,i}|^t,
    \]
    for any cover $\{U_k\}$ of $N_{n,i} \cap N_{n+1}$.
    \addtocounter{enumi}{1}
  \item[(\roman{enumi})] For any $N_{n,i}$ and $N_{n+1,j}$ we have
    \[
    \frac{|N_{n,i}|}{|N_{n+1,j}|} > (4K)^{1/\varepsilon}.
    \]
  \end{enumerate}
\end{lemma}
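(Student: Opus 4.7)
The plan is to construct $N_n$ by induction on $n$, maintaining the invariant that each $N_{n,i}$ is a \emph{good} interval in the sense of Lemma~\ref{lem:energyanddistorsion}: there is a generation $\ell_{n,i}$ such that $N_{n,i}$ is contained in a cylinder of generation $\ell_{n,i}$ and $|T^{\ell_{n,i}}(N_{n,i})| > d_0$. This invariant ensures that each $N_{n,i}$ satisfies \eqref{eq:goodinterval} with the constant $K = K(d_0)$, so Lemma~\ref{lem:localfrostman} applies to it with $c = K$. A base case is handled by running the inductive step starting from $[0,1]$ (which is trivially good at generation $0$ for any $d_0 < 1$) to obtain $N_0$.

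For the inductive step, assume $N_n = \bigcup_i N_{n,i}$ has been built. Applying Lemma~\ref{lem:localfrostman} to each of the finitely many intervals $N_{n,i}$ yields a threshold $m^{(i)}$ such that for every $m > m^{(i)}$, any cover $\{U_k\}$ of $E_m \cap N_{n,i}$ satisfies $\sum_k |U_k|^t \geq \frac{1}{2K} |N_{n,i}|^t$. I then pick $m_{n+1} > \max(m_n, \max_i m^{(i)})$ so large that every connected component of $E_{m_{n+1}}$ has diameter strictly less than $\delta := \min_i |N_{n,i}|/(4K)^{1/\varepsilon}$; this is possible because component sizes of $E_m$ tend to zero by assumption.

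Within each $N_{n,i}$, I examine the connected components $J$ of $E_{m_{n+1}} \cap N_{n,i}$. To each such $J$ I associate the largest generation $\ell_J$ for which $J$ lies inside a single cylinder, and call $J$ \emph{good} if in addition $|T^{\ell_J}(J)| > d_0$. The intervals $N_{n+1,j}$ are then declared to be the closures of these good components; the inductive invariant is preserved by construction, property~(ii) holds since each good component lies in $E_{m_{n+1}}$, and (iv) follows from $|N_{n+1,j}| < \delta \leq |N_{n,i}|/(4K)^{1/\varepsilon}$.

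The principal obstacle is verifying property~(iii). The cover bound $\frac{1}{2K}|N_{n,i}|^t$ from Lemma~\ref{lem:localfrostman} applies to all of $E_{m_{n+1}} \cap N_{n,i}$, but we have discarded the bad components. A bad component $J$ has $|T^{\ell_J}(J)| \leq d_0$ and, by maximality of $\ell_J$, must straddle a boundary point between two cylinders of generation $\ell_J + 1$; hence the bad components sit in a thin neighbourhood of the cylinder-boundary skeleton inside $N_{n,i}$. Using the bounded number of boundary points of each generation, bounded distortion, and the fact that enlarging $m_{n+1}$ further shrinks the bad components into still thinner boundary neighbourhoods, one can show that the $t$-content of the bad region can be made at most $\frac{1}{4K}|N_{n,i}|^t$. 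This geometric estimate near cylinder boundaries is the technical heart of the argument.
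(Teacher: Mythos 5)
Your construction is sound up to and including properties (i), (ii) and (iv), and your inductive invariant (each $N_{n,i}$ lies in a cylinder of some generation with large image, so that Lemma~\ref{lem:energyanddistorsion} and hence Lemma~\ref{lem:localfrostman} with $c=K$ apply to it) is essentially the right one. The gap is exactly where you locate it, in property (iii), and I do not believe the geometric estimate you appeal to is true. A bad component $J$ of length about $\delta$ has $|T^{\ell_J}(J)|\leq d_0$, so by uniform expansion and bounded distortion $\ell_J\lesssim \log_\lambda(d_0/\delta)=:L$, and $J$ contains a boundary point of the partition of generation at most $L+1$. There are of order $(\#\mathscr{Z})^{L}\approx (d_0/\delta)^{h}$ such boundary points, where $h=\log\#\mathscr{Z}/\log\lambda$, and each can meet at most one component; so the natural bound on the $t$-content of the bad region is of order $(d_0/\delta)^{h}\,\delta^{t}=d_0^{h}\,\delta^{t-h}$. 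For $t<h$ (which is the typical situation here, since $t$ can be as small as $1/\alpha$ while $h$ can exceed $1$) this \emph{diverges} as $\delta\to 0$: enlarging $m_{n+1}$ shrinks the components but proliferates the relevant boundary points faster, so "still thinner boundary neighbourhoods" does not help. Rescuing this would require showing that for a.e.\ $x$ the orbit rarely lands near the generation-$\ell$ boundary skeleton at scale $r_k$ simultaneously over all relevant $\ell$ — itself a shrinking-target problem over a countable dense singularity set — and nothing of the sort is sketched.

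The paper avoids needing any content bound on the discarded set. It takes the children $N_{n+1,j}$ to be good \emph{cylinders} $I_\ell(y)$ (not components of $E_{m_n}$) chosen inside the open set $E_{m_n}\cap N_{n,i}$, and it measures what is kept, not what is thrown away: by Hofbauer's lemma, for $d_0$ small the set of points \emph{not} lying in infinitely many good cylinders has Hausdorff dimension less than $t$, hence is null for the Frostman-type measure $\nu_n$ of \eqref{eq:nudefinition} (which satisfies $\nu_n(U)\leq 2K|U|^t/|N_{n,i}|^t$); a Vitali-type selection then produces a finite disjoint family of good cylinders carrying $\nu_n$-mass greater than $1/2$. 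Property (iii) with the constant $\tfrac{1}{4K}$ then follows immediately from $\tfrac12<\nu_n(N_{n+1}\cap N_{n,i})\leq\sum_k\nu_n(U_k)\leq 2K\sum_k|U_k|^t/|N_{n,i}|^t$. This is the ingredient your argument is missing: some mechanism (here, the dimension statement in Hofbauer's lemma combined with the $\nu_n$-Frostman bound) guaranteeing that the good part retains a definite fraction of the relevant mass, rather than a direct estimate on the bad part.
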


\begin{proof}
  By Hofbauer \cite{Hofbauer}, Lemma 13, we have that if we choose
  $d_0$ sufficiently small, then the Hausdorff dimension of the set of
  points, for which $|T^n I_n (x)| > d_0$ does not hold for infinitely
  many different $n$, is arbitrarily close to $0$. In particular, if
  we choose $d_0$ sufficiently small, then there is a set $A$ of full
  measure such that for any $x \in A$ there are infinitely many $n$
  with $|T^n I_n (x)| > d_0$.

  If $x \in A$ and $I_n (x)$ has the property that $|T^n I_n (x)| >
  r_0$, then we let $J_{x,n} = I_n (x)$. We denote by $\mathscr{J}$
  the set of all $J_{x,n}$, that is
  \[
  \mathscr{J} = \{\, J_{x,n} : x \in A \, \}.
  \]

  We will define the sets $N_n$ inductively as follows. We set $N_0 =
  [0,1]$. Clearly $[0,1]$ satisfies the assumptions of
  Lemma~\ref{lem:energyanddistorsion}. We let $m_0 = n_{[0,1]}$, where
  $n_{[0,1]}$ is by Lemma~\ref{lem:localfrostman}.

  Suppose that $N_n$ has been defined together with a number $m_n$
  such that for any $N_{n,i}$, Lemma~\ref{lem:localfrostman} is
  satisfied with $n_{N_{n,i}} \leq m_n$.

  We wish to define $N_{n+1}$. The set $A \cap N_n$ has full measure
  in $N_n$. Hence, for any $\varepsilon_n > 0$, we can find a finite
  and disjoint collection $\mathscr{J}_n \subset \mathscr{J}$ such
  that for all $J_{x,n} \in \mathscr{J}_n$ we have $|J_{x,n}| <
  \varepsilon_n$ and $J_{x,n} \subset E_{m_n}$. Moreover, we can
  choose the collection $\mathscr{J}_n$ such that for any $N_{n,i}$,
  if $\mathscr{J}_n'$ denotes the elements of $\mathscr{J}_n$ that are
  subsets of $N_{n,i}$, then
  \[
  \nu_n (\cup \mathscr{J}_n') > \frac{1}{2},
  \]
  where the measure $\nu_n$ is defined by \eqref{eq:nudefinition}. As
  in the proof of Lemma~\ref{lem:localfrostman}, we can then conclude
  that for any $N_{n,i}$ we have
  \[
  \sum |U_k|^t \geq \frac{1}{4}
  K^{-1} |N_{n,i}|^t,
  \]
  for any cover $\{U_k\}$ of $N_{n,i}$.

  We put $N_{n+1} = \cup \mathscr{J}_n$ and $\{N_{n+1,i}\} =
  \mathscr{J}_n$. The number $m_{n+1}$ is taken to be an upper bound
  of $\{\, n_I : I \in J_n \,\}$. By taking $\varepsilon_n$
  sufficiently small we can achieve that
  \[
  \frac{|N_{n,i}|}{|N_{n+1,j}|} > (4K)^{1/\varepsilon}
  \]
  holds for any $N_{n,i}$ and $N_{n+1,j}$.

  By induction, we now get the sets $N_n$ with the desired properties.
\end{proof}

\begin{lemma} \label{lem:dimofcantorset}
  With the assumptions and notation of Lemma~\ref{lem:net} we have
  that $\dimH N \geq t - \varepsilon$, where $N = \cap N_n$.
\end{lemma}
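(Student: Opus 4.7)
The plan is to apply the mass distribution principle by constructing a Borel probability measure $\mu$ on $N = \bigcap_n N_n$ satisfying a Frostman-type estimate at exponent $t - \varepsilon$. With $N_{0,1} = [0,1]$, define $\mu$ recursively by $\mu(N_{0,1}) = 1$ and, for each cell $N_{n,i}$ and each child $N_{n+1, j} \subset N_{n,i}$,
\[
\mu(N_{n+1, j}) = \mu(N_{n,i}) \cdot \frac{|N_{n+1, j}|^{t}}{\sum_{l \,:\, N_{n+1, l} \subset N_{n,i}} |N_{n+1, l}|^{t}}.
\]
This prescription is consistent ($\sum_j \mu(N_{n+1,j}) = \mu(N_{n,i})$ over children) and extends to a Borel probability measure on $N$.

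The first step is to verify the cylinder-wise Frostman bound $\mu(N_{n,i}) \leq |N_{n,i}|^{t-\varepsilon}$. Property (iii) applied with the cover of $N_{n,i} \cap N_{n+1}$ by its children gives $\sum_{l} |N_{n+1,l}|^{t} \geq (4K)^{-1} |N_{n,i}|^{t}$, so the recursion becomes $\mu(N_{n+1, j}) \leq 4K\, \mu(N_{n,i})\, |N_{n+1, j}|^{t}/|N_{n,i}|^{t}$; telescoping along the ancestor chain yields $\mu(N_{n,i}) \leq (4K)^{n} |N_{n,i}|^{t}$. Telescoping property (iv) gives $|N_{n,i}| < (4K)^{-n/\varepsilon}$, equivalently $(4K)^{n} < |N_{n,i}|^{-\varepsilon}$, and combining the two estimates yields the bound $\mu(N_{n,i}) \leq |N_{n,i}|^{t-\varepsilon}$.

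The second step is to extend this bound from cells to arbitrary intervals, proving $\mu(U) \leq C |U|^{t-\varepsilon}$ for a constant $C = C(K,\varepsilon)$. For $U$ meeting $N$, let $n^{*}$ be the maximal integer with $U \subset N_{n^{*}, i^{*}}$ for some $i^{*}$. If $U$ meets only one child $N_{n^{*}+1, j_{0}}$ of $N_{n^{*}, i^{*}}$, then $\mu(U) = \mu(U \cap N_{n^{*}+1, j_{0}})$ and one recurses on $U \cap N_{n^{*}+1, j_{0}}$, where $n^{*}$ has strictly increased. Otherwise $U$ meets at least two children, so $\mu(U) = \sum_{j} \mu(U \cap N_{n^{*}+1, j})$ is split: fully contained children contribute at most $|N_{n^{*}+1, j}|^{t-\varepsilon}$ each by the cylinder estimate, and the at most two extremal partial children are handled by further recursion.

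The main obstacle lies in closing the recursion of the second step. Since $t - \varepsilon < 1$ in the regime of interest, the sum $\sum_{j} |N_{n^{*}+1, j}|^{t-\varepsilon}$ over children met by $U$ could a priori exceed $|U|^{t-\varepsilon}$, and controlling this requires using the uniform geometric ratio in property (iv) to limit the effective number of scales at which mass accumulates, so that the recursion closes with a bound proportional to $|U|^{t-\varepsilon}$ rather than blowing up. Once $\mu(U) \leq C |U|^{t-\varepsilon}$ is established for all intervals, the mass distribution principle gives $\mathcal{H}^{t-\varepsilon}(N) \geq \mu(N)/C = 1/C > 0$, whence $\dimH N \geq t - \varepsilon$.
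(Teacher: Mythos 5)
Your Step 1 is correct: properties (iii) and (iv) do give $\mu(N_{n,i}) \leq (4K)^n |N_{n,i}|^t \leq |N_{n,i}|^{t-\varepsilon}$ for the cells themselves. But Step 2 --- which you yourself identify as ``the main obstacle'' and leave unresolved --- is a genuine gap, and it is not merely a missing computation: the measure you define need not satisfy $\mu(U) \leq C|U|^{t-\varepsilon}$ for arbitrary intervals $U$ under the hypotheses available from Lemma~\ref{lem:net}. The problem is spatial concentration \emph{within a single level}, which neither (iii) nor (iv) rules out. Concretely, suppose $N_{n,i}=[0,1]$ and its children consist of two groups: a ``spread out'' group in $[0,1/2]$ whose cheapest cover costs about $(1/2)^t \geq (4K)^{-1}$ (this alone secures property (iii)), together with $M$ intervals of length $\ell$ packed into $[1-\delta,1]$ with $M\ell \leq \delta$ and $M\ell^t$ chosen to dwarf the $t$-sum of the first group (possible for any $\delta$, since $\ell^{t-1}\to\infty$ as $\ell\to 0$, and consistent with (iv)). Your weights assign $U=[1-\delta,1]$ mass $\mu(U) = \mu(N_{n,i})\,M\ell^t/\bigl(M\ell^t + S_1\bigr)$, which is nearly all of $\mu(N_{n,i})$ while $|U|=\delta$ is arbitrarily small; so the Frostman bound fails, at arbitrarily small scales as $n$ grows. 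Property (iv) controls the ratio of scales between consecutive generations and is correctly used in your Step~1, but it says nothing about where the children sit inside their parent or how $\sum_{j}|N_{n+1,j}|^t$ distributes over subintervals, so it cannot close Step~2. Note also that (iii) is a \emph{lower} bound on all covers; it yields no upper bound on $\sum_{j \subset U}|N_{n+1,j}|^t$ in terms of $|U|^t$.

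This is precisely why the paper does not argue via a mass distribution. Its proof works directly with an arbitrary cover $\mathscr{U}$ of $N$ and the quantity $Z_{t-\varepsilon}(\mathscr{U})=\sum_k|U_k|^{t-\varepsilon}$: after normalising the cover to have the ``wholeness property,'' it runs an induction from level $n_0$ down to $0$, replacing elements that jointly cover all children of some $N_{n,i}$ by $N_{n,i}$ itself and using (iii) only in the form $|N_{n,i}|^t \leq 4K\sum|U_{k}|^t$. The bookkeeping shows each cover element is charged the factor $4K$ at most once, giving $Z_{t-\varepsilon}(\mathscr{U}) \geq 1/(8KK')$ uniformly. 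That argument only ever needs lower bounds on covers --- exactly what (iii) provides --- whereas a mass distribution proof needs upper bounds on the mass of arbitrary intervals, which would require either a differently weighted measure (weights reflecting covering cost rather than $|N_{n+1,j}|^t$) or additional separation/homogeneity hypotheses that Lemma~\ref{lem:net} does not supply.
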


\begin{proof}
  Consider any countable cover $\mathscr{U}=\{U_k\}$ of the set
  $N$. Since $N$ is compact, we can assume that $\mathscr{U}$ is a
  finite cover. We will consider the sum
  \[
  Z_{t-\varepsilon}(\mathscr{U}) = \sum_k |U_k|^{t-\varepsilon},
  \]
  trying to prove that it is uniformly bounded away from 0.

  Step 1. There exists $n_0$ such that there is a finite cover
  $\mathscr{U}' = \{U_k'\}$ of $N$ such that each intersection $U_k'
  \cap N$ is a finite union of $N \cap N_{n_0,i_\ell}$ and
  \begin{equation} \label{eqn:ind1}
    Z_{t- \varepsilon}(\mathscr{U}) \geq \frac 12 Z_{t-\varepsilon}
    (\mathscr{U}').
  \end{equation}
  This can be done by taking $n_0$ so large that the intervals
  $N_{n_0,i}$ are much smaller than all the (finitely many) elements
  of the cover $\mathscr{U}$, and then perturb each $U_k$ so that it
  is aligned with the intervals $N_{n_0,i}$.

  Step 2. Consider a new cover $\mathscr{U}''$, obtained in the
  following way. For any $U_k'$, the set $U_k'\cap N$ must be
  contained in some $N_{n,i}$. There are at most two sets $N_{n+1, j}$
  that intersect $U_k'$ but are not contained in $U_k'$. We replace
  $U_k'$ by at most three open sets: $U_k' \cap N_{n,i} \cap N_{n+1,
    j_1}$, $U_k' \cap N_{n,i} \cap N_{n+1, j_2}$, and $U_k' \cap
  N_{n,i} \setminus \overline{(N_{n+1, j_1} \cup N_{n+1, j_2})}$. The
  latter we leave as is, with the former two we repeat the
  procedure. The end result of this procedure: instead of $U_k'$ we
  have a finite family of open sets $U_\ell''$, each of which contains
  a finite union of $N_{n',i}$ for some $n'$ and does not intersect
  other $N_{n',j}$ (we will call this the {\it wholeness
    property}). We will call such $U_\ell''$ a $n'$-th level element.

  Note that in this subfamily there will be at most one element of
  level $n$ and at most two elements of each level $n'$, $n<n'\leq
  n_0$. The lengths of elements of level $n$ or $n+1$ are not greater
  than of the original $|U_k'|$, and by Lemma~\ref{lem:net}, for any
  element $U_\ell''$ of level $n' \geq n+2$ we have
  \[
  |U_\ell''| \leq (4K)^{-(n'-n-1)/\varepsilon} |U_k'|. 
  \]
  Hence, 
  \[
  \sum_\ell |U_\ell''|^{t-\varepsilon} \leq K' |U_k'|^{t-\varepsilon}
  \]
  for
  \[
  K' = 3 + \frac 2 {(4K)^{(t-\varepsilon)/\varepsilon} -1}.
  \]

  Repeating this procedure for all $U_k'$ and combining the
  subfamilies $\{U_\ell''\}$, we get a new cover $\mathscr{U}''$
  consisting only of the elements with the wholeness property and
  satisfying
  \begin{equation} \label{eqn:ind2}
    Z_{t-\varepsilon}(\mathscr{U}'') \leq K'
    Z_{t-\varepsilon}(\mathscr{U}').
  \end{equation}

  Step 3. Rename $\mathscr{U}''$ by $\mathscr{U}^{(n_0)}$. We remind
  that $n_0$ is the smallest $n$ for which $N_n\subset \bigcup U_k''$
  (that is, the maximal level of elements in $U''$).

  We construct the sequence of covers $\mathscr{U}^{(n)}$ in the
  following way: let $\mathscr{U}^{(n+1)}$ be a cover with the
  wholeness property and with maximal level of elements
  $n+1$. Whenever for some $N_{n,i}$ there are elements
  $U_{k_1}^{(n+1)}, \ldots, U_{k_\ell}^{(n+1)}$ that together cover
  all $N_{n+1,j}\subset N_{n,i}$, we replace those elements by
  $N_{n,i}$. The cover constructed in this way has wholeness property
  and does not have elements of level greater than $n$. Moreover, by
  Lemma \ref{lem:net},
  \begin{equation} \label{eqn:s1}
    |N_{n,i}|^t \leq {4K} \sum |U_{k_i}^{(n+1)}|^t.
  \end{equation}

  Let us divide the elements of $\mathscr{U}^{(n+1)}$ into three
  subcategories. An element $U_k^{(n+1)}$ is called
  \begin{itemize}
  \item {\it simple} if it is of the form $N_{n+1,j}$,
  \item {\it imminent} if it is not simple but of level $n+1$ (hence
    $U_k^{(n+1)} \cap N$ is contained in some $N_{n,j}$),
  \item {\it nonimminent} if it is of level not greater than $n$.
  \end{itemize}
  We divide the sum correspondingly:
  \[
  Z_{t-\varepsilon}(\mathscr{U}^{(n+1)}) =
  Z_{t-\varepsilon}^{(\mathrm{s})}(\mathscr{U}^{(n+1)}) +
  Z_{t-\varepsilon}^{(\mathrm{i})}(\mathscr{U}^{(n+1)}) +
  Z_{t-\varepsilon}^{(\mathrm{n})}(\mathscr{U}^{(n+1)}).
  \]

  Observe that by the construction of $\mathscr{U}^{(n)}$, the simple
  and imminent elements of $\mathscr{U}^{(n+1)}$ are replaced by
  simple elements of $\mathscr{U}^{(n)}$, while the nonimminent
  elements of $\mathscr{U}^{(n+1)}$ pass to $\mathscr{U}^{(n)}$
  unchanged (where some of them become imminent, the other stay
  nonimminent). Hence,
  \begin{equation} \label{eqn:inductive1}
    Z_{t-\varepsilon}^{(\mathrm{i})}(\mathscr{U}^{(n)}) +
    Z_{t-\varepsilon}^{(\mathrm{n})}(\mathscr{U}^{(n)}) =
    Z_{t-\varepsilon}^{(\mathrm{n})}(\mathscr{U}^{(n+1)}).
  \end{equation}

  As for $Z_{t-\varepsilon}^{(\mathrm{s})}(\mathscr{U}^{(n)})$,
  \eqref{eqn:s1} implies
  \[
  Z_{t}^{(\mathrm{s})}(\mathscr{U}^{(n)}) \leq {4K} \bigl(
  Z_{t}^{(\mathrm{s})}(\mathscr{U}^{(n+1)}) +
  Z_{t}^{(\mathrm{i})}(\mathscr{U}^{(n+1)}) \bigr).
  \]

  By Lemma~\ref{lem:net}, if $N_{n+1, i}\subset N_{n,j}$ then
  \[
  \frac 1 {4K} |N_{n+1,i}|^{-\varepsilon} \geq |N_{n,j}|^{-\varepsilon}.
  \]
  hence
  \begin{equation} \label{eqn:inductive2}
    Z_{t-\varepsilon}^{(\mathrm{s})}(\mathscr{U}^{(n)}) \leq
    Z_{t-\varepsilon}^{(\mathrm{s})}(\mathscr{U}^{(n+1)}) + {4K}
    Z_{t-\varepsilon}^{(\mathrm{i})}(\mathscr{U}^{(n+1)}).
  \end{equation}

  Step 4. Induction procedure leads us to the cover $\mathscr{U}^{(0)}
  = \{[0,1]\}$. We have
  \[
  Z_{t-\varepsilon}(\mathscr{U}^{(0)}) =
  Z_{t-\varepsilon}^{(\mathrm{s})} (\mathscr{U}^{(0)}) =
  |[0,1]|^{t-\varepsilon} = 1.
  \]
  Combining equations \eqref{eqn:inductive1} and
  \eqref{eqn:inductive2} and repeating the inductive procedure from
  $n_0$ to 0, we observe that over the procedure, the nonimminent
  element of $\mathscr{U}^{(n_0)}$ first stays nonimminent for some
  time, then it becomes imminent, one step later it is combined into a
  simple element, and then it is combined with other elements into
  another simple element at each step. The only moment in this
  procedure when $Z_{t-\varepsilon}$ can increase is when the imminent
  element is combined into a simple element, which happens at most
  once for each element of $\mathscr{U}^{(n_0)}$. Moreover, at this
  time the corresponding term in the sum $Z_{t-\varepsilon}$ can
  increase at most by a factor $4K$. Hence,
  \[
  Z_{t-\varepsilon}(\mathscr{U}^{(n_0)}) \geq \frac 1 {4K}.
  \]
  Combining this with \eqref{eqn:ind1} and \eqref{eqn:ind2} we get
  \[
  Z_{t-\varepsilon}(\mathscr{U}) \geq \frac 1 {8KK'}.
  \]
  Since the cover $\mathscr{U}$ is arbitrary, it follows that $\dimH N
  \geq t - \varepsilon$.
\end{proof}

\begin{proof}[Proof of Theorem~\ref{the:gibbs}]
  We can now prove Theorem~\ref{the:gibbs} in the same way as
  Theorem~\ref{the:shrinkingtarget}, by replacing the use of
  Lemma~\ref{lem:frostman} with that of
  Lemmata~\ref{lem:localfrostman}, \ref{lem:energyanddistorsion},
  \ref{lem:net} and \ref{lem:dimofcantorset}.

  Since the proof is very similar to that of
  Theorem~\ref{the:shrinkingtarget}, we will only sketch the proof.
  We define the sets $V_n (x)$ and the measures $\mu_{n,x}$ as in the
  proof of Theorem~\ref{the:shrinkingtarget}. We will then have that
  $\mu_{n,x}$ converges weakly to $\mu_\phi$ for $\mu_\phi$-almost
  every $x$.

  We consider the energies $I_s (\mu_{n,x})$ and their expectations
  $\int I_s (\mu_{n,x}) \, \mathrm{d} \mu_\phi$ just as in the proof
  of Theorem~\ref{the:shrinkingtarget} and carry out the same
  estimates.  When we use Lemma~\ref{lem:decay} we need to know that
  \[
  \int |x-y|^{-s} \, \mathrm{d} \mu_\phi (y)
  \]
  is uniformly bounded in $x$. This follows from
  Assumption~\ref{as:measureofball} according to
  Remark~\ref{rem:dimension}.

  In this way we are able to conclude that for $\mu_\phi$-almost all
  $x$, there is a sub-sequence along which the energies
  \[
  \iint |x-y|^{-s} \, \mathrm{d} \mu_\phi (x) \mathrm{d} \mu_\phi (y)
  \]
  are uniformly bounded provided
  \[
  s < \sup \{\, t : \exists c, \forall n : n^{-2} \sum_{j=1}^n
  r_j^{-t} < c \,\}.
  \]
  We can now apply Lemmata~\ref{lem:localfrostman},
  \ref{lem:energyanddistorsion}, \ref{lem:net} and
  \ref{lem:dimofcantorset} to get the desired result on the dimension
  of the set $E (x,r)$.
\end{proof}

\begin{proof}[Proof of Theorem~\ref{the:markov}]
  In the case that $T$ is a Markov map, then we can use
  Lemma~\ref{lem:energyanddistorsion} to conclude that for $t < s$
  there is a constant $K$ such that
  \[
  \iint_{I \times I} |x-y|^{-t} \, \mathrm{d}\mu_\phi (x) \mathrm{d}
  \mu_\phi (y) < K |I|^{-t} \mu_\phi (I)^2
  \]
  holds for any interval $I \subset [0,1]$. Together with
  Lemma~\ref{lem:localfrostman} and what was proved in the proof of
  Theorem~\ref{the:gibbs}, we can conclude that for $\mu_\phi$-almost
  all $x$, whenever $I$ is an interval and $n$ is sufficiently large,
  then any cover $\{U_k\}$ of $E_n \cap I$ satisfies
  \[
  \sum |U_k|^t \geq \frac{1}{2K} |I|^t.
  \]
  This implies, according to Falconer \cite{Falconer}, that $E(x,r)
  \in \mathscr{G}^t$.
\end{proof}

\section*{Acknowledgements}

We thank Roland Zweim\"uller for helping us to find references
\cite{Aaronson} and \cite{AaronsonDenker}.

Micha\l{} Rams was supported by MNiSW grant N201 607640 and National
Science Centre grant 2014/13/B/ST1/01033 (Poland).

\end{document}